\newcommand{\mc}{\mathscr}
\newcommand{\f}{\mathbb}
\newcommand{\cu}{\subseteq}
\newcommand{\wh}{\widehat}
\newcommand{\wt}{\widetilde}
\newcommand{\serie}[1]{\{#1_{n}\}_n}
\newcommand{\sdiag}[1]{\{D_{n}(#1)\}_n}
\newcommand{\GLT}{\sim_{GLT}}
\newcommand{\acs}{\xrightarrow{a.c.s.}}
\newcommand{\ve}{\varepsilon}
\newcommand{\B}{\{\{B_{n,m}\}_n\}_m}
\newcommand{\tratti}{\rightharpoonup}
\newcommand{\dacs}[2]{d_{acs}\left(#1,#2\right)}
\DeclareMathOperator{\rk}{rk}
\newtheorem{definition}{Definition}[section]
\newtheorem{lemma}[definition]{Lemma}
\newtheorem{theorem}{Theorem}[section]
\newtheorem*{theorem*}{Theorem}
\title{Diagonal Matrix Sequences and their Spectral Symbols}
\author{Giovanni Barbarino}
\date{}
\begin{document}

\maketitle

\begin{abstract}
The spectral symbols are useful tools to analyse the eigenvalue distribution when dealing with high dimensional linear systems. Given a matrix sequence with an asymptotic symbol, the last one depends only on the spectra of the individual matrices, seen as a not ordered set. We can then focus only on diagonal sequences and sort the eigenvalues so that they become an approximation of the symbol sampling. We show that this is linked to the concept of diagonal Generalized Locally Toeplitz (GLT) sequences, and in particular we prove that any diagonal sequence with a real valued symbol can be permuted in order to obtain a diagonal GLT sequence with the same symbol.

\end{abstract}

\section{Introduction}

A \textit{Matrix Sequence} is an ordered collection of complex valued matrices with increasing size, and is usually denoted as $ \serie A $, where $ A_n\in \f C^{n\times n} $. 

Matrix sequences naturally arise in several contexts. For example, the discretization of
a linear differential or integral equation by a linear numerical method (such as the finite difference method, the finite
element method, the modern isogeometric analysis, etc.) leads to linear systems whose sizes are directly proportional to the accuracy of the method.
The speed of convergence of the solvers (Conjugate Gradient, Preconditioned Krylov methods, Multigrid techniques, etc.) depends on the spectra of the matrices, so the knowledge of the eigenvalue distribution is a strong tool which we can use to
choose or to design the best solver and method of discretization.

It is often observed in practice that the matrix sequences $ \serie A $ generated by discretization methods possess a \textit{spectral symbol}, that is a measurable function describing the asymptotic distribution of the eigenvalues of $A_n$. We recall that a spectral symbol associated with a sequence $\serie A$ is a measurable functions $k:D\cu \f R^n\to \f C$ satisfying 
\[
\lim_{n\to\infty} \frac{1}{n} \sum_{i=1}^{n} F(\lambda_i(A_n)) = \frac{1}{\mu(D)}\int_D F(k(x)) dx
\]
for every continuous function $F:\f C\to \f C$ with compact support, where $D$ is a measurable set with finite Lebesgue measure $\mu(D)>0$. In this case we write 
\[ \serie A\sim_\lambda k(x). \]
We can also consider the singular values of the matrices instead of the eigenvalues. In this case we have that $k:D\cu \f R^n\to \f C$ is a \textit{singular value symbol} of the sequence $\serie A$ if it satisfies
\[
\lim_{n\to\infty} \frac{1}{n} \sum_{i=1}^{n} F(\sigma_i(A_n)) = \frac{1}{\mu(D)}\int_D F(|k(x)|) dx
\]
for every continuous function $F:\f R\to \f C$ with compact support, where $D$ is a measurable set with finite Lebesgue measure $\mu(D)>0$. In this case we write 
\[ \serie A\sim_\sigma k(x). \]
If $\serie A\sim_\lambda k(x)$, then we can consider the diagonal matrices $D_n\in \f C^{n\times n}$ that contain the eigenvalues of $A_n$. We get again that $\serie D\sim_\lambda k(x)$, so we can focus only on diagonal sequences.
Moreover, we will consider only spectral symbol with domain $D=[0,1]$, so from now on 
\[ 
k:[0,1]\to \f C.
 \]

$ $\\

Given a diagonal matrix $D_n \in \f C^{n\times n}$, with diagonal entries $ d^{(n)}_i = [D_n]_{i,i}  $ we can consider the piecewise linear function $k_n:[0,1]\to \f C$ such that interpolate the values $ d^{(n)}_i $ on the nodes $i/n$ and is linear in between.
\[
k_n(0) = 0\qquad k_n\left( \frac{i}{n}\right) = d^{(n)}_i \,\,\forall\,i =1,2,\dots,n
\] 
We say that $D_n$ converges \textit{piecewise} to a function $ k:[0,1]\to \f C $ if the interpolations $k_n(x)$ converge in measure  to $k(x)$. In this case, we write 
\[ \serie D\tratti k(x). \]
 We will give a more rigorous definition later. The first important result regarding this convergence is expressed by the following Theorem that will be proved in Section 3.

\begin{theorem*}
Given a diagonal sequence $\serie D$ and a measurable function $ k:[0,1]\to \f C $, then
\[
 \serie D\tratti k(x)\implies \serie D\sim_\lambda k(x)
\]
\end{theorem*}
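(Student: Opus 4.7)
The plan is to unfold the definition of $\sim_\lambda$ and show, for every continuous $F:\f C\to\f C$ with compact support, that
\[
\frac{1}{n}\sum_{i=1}^n F(d_i^{(n)})\;\longrightarrow\;\int_0^1 F(k(x))\,dx.
\]
I would introduce the auxiliary quantity $\int_0^1 F(k_n(x))\,dx$ and split the total error as $A_n+B_n$ with
\[
A_n=\frac{1}{n}\sum_{i=1}^n F(d_i^{(n)})-\int_0^1 F(k_n(x))\,dx,\qquad B_n=\int_0^1 F(k_n(x))\,dx-\int_0^1 F(k(x))\,dx,
\]
and show $A_n\to 0$ and $B_n\to 0$ separately.

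The term $B_n$ is the easy one: $F$ is bounded by $\|F\|_\infty$ and continuous, so the hypothesis $k_n\to k$ in measure together with the dominated convergence theorem immediately give $B_n\to 0$.

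For $A_n$, $F$ is uniformly continuous with some modulus $\omega_F$, and on each $J_i=((i-1)/n,i/n]$ the interpolant $k_n$ is affine with $k_n(i/n)=d_i^{(n)}$ and $k_n((i-1)/n)=d_{i-1}^{(n)}$ (using the convention $d_0^{(n)}=0$), so $|k_n(x)-d_i^{(n)}|\le|d_i^{(n)}-d_{i-1}^{(n)}|$ for every $x\in J_i$. Hence
\[
|A_n|\le \frac{1}{n}\sum_{i=1}^n \omega_F\bigl(|d_i^{(n)}-d_{i-1}^{(n)}|\bigr)\le \omega_F(\delta)+2\|F\|_\infty\cdot\frac{|I_n(\delta)|}{n},
\]
where $I_n(\delta)=\{i:|d_i^{(n)}-d_{i-1}^{(n)}|>\delta\}$. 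Since $\omega_F(\delta)\to 0$ as $\delta\to 0$, the problem reduces to proving that $|I_n(\delta)|/n\to 0$ for every fixed $\delta>0$.

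This last reduction is the heart of the argument, and where the hypothesis of convergence in measure must be used in a non-trivial way: a priori, many steep pieces of $k_n$ are compatible with $k_n\to k$ in measure provided the oscillations cancel quickly. I would apply Lusin's theorem: given $\ve_1>0$, pick a compact $K\cu[0,1]$ with $|[0,1]\setminus K|<\ve_1$ on which $k$ restricts to a uniformly continuous function with modulus $\omega_K$. Set $\ve_2=\delta/16$ and $B_n^\ast=\{x:|k_n(x)-k(x)|>\ve_2\}$; by hypothesis $|B_n^\ast|\to 0$. Call an index $i$ \emph{bad} if $|J_i\cap([0,1]\setminus K)|>1/(4n)$ or $|J_i\cap B_n^\ast|>1/(4n)$; the identities $\sum_i|J_i\cap([0,1]\setminus K)|<\ve_1$ and $\sum_i|J_i\cap B_n^\ast|=|B_n^\ast|$ then show that there are at most $4n\ve_1+4n|B_n^\ast|$ bad indices. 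For any good $i$ the set $E_i=J_i\cap K\cap(B_n^\ast)^c$ has measure at least $1/(2n)$, and fixing any $x_0\in E_i$ one has $|k_n(x)-k(x_0)|\le\ve_2+\omega_K(1/n)$ for every $x\in E_i$. Since $k_n$ is affine, it maps $E_i$ bijectively onto a subset of the segment $[d_{i-1}^{(n)},d_i^{(n)}]\cu\f C$ of length $|E_i|\cdot n|d_i^{(n)}-d_{i-1}^{(n)}|\ge|d_i^{(n)}-d_{i-1}^{(n)}|/2$; but this subset also lies inside a disk of radius $r=\ve_2+\omega_K(1/n)$, and any line segment in $\f C$ meets such a disk in a sub-segment of length at most $2r$, forcing $|d_i^{(n)}-d_{i-1}^{(n)}|\le 4r<\delta$ as soon as $n$ is large enough that $\omega_K(1/n)<\delta/16$. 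Consequently every $i\in I_n(\delta)$ is bad, $|I_n(\delta)|/n\le 4\ve_1+4|B_n^\ast|\to 4\ve_1$, and the arbitrariness of $\ve_1$ yields $|I_n(\delta)|/n\to 0$, completing the proof.
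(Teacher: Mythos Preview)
Your argument is correct, and it follows a genuinely different route from the paper's proof. The paper proceeds indirectly through its GLT machinery: it approximates $k$ by continuous $f_m$, invokes Lemma~\ref{mn} to produce a concrete diagonal sequence $\{D_n(f_{m(n)})\}_n$ that is simultaneously GLT and piecewise convergent to $k$, uses Lemmas~\ref{lambdacomp} and~\ref{commuta} to pass the spectral symbol through the a.c.s.\ limit, and finally compares $\{D_n\}$ to $\{D_n(f_{m(n)})\}_n$ via Lemma~\ref{zerotratti} and Lemma~\ref{zerosum}. In other words, the paper never estimates the Riemann-type sums directly; it reduces everything to the zero-distributed case and to a.c.s.\ closure.

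Your proof, by contrast, is entirely elementary and self-contained: you attack $\frac{1}{n}\sum F(d_i^{(n)}) - \int F(k)$ head-on, splitting off the easy part $B_n$ by dominated convergence and handling $A_n$ by a Lusin-theorem argument that controls the number of ``steep'' affine pieces of $k_n$. The geometric step---that the affine image of a good $E_i$ has one-dimensional measure at least $|d_i^{(n)}-d_{i-1}^{(n)}|/2$ yet is trapped in a chord of length $\le 2r$---is a nice way to extract $|I_n(\delta)|/n\to 0$ from convergence in measure. The payoff is that your argument needs none of the a.c.s./GLT infrastructure (no Lemma~\ref{mn}, no Theorem~\ref{S}, no Lemma~\ref{commuta}); the paper's approach, on the other hand, recycles lemmas it needs anyway for Theorem~\ref{tratti} and the later permutation results, so within the paper it is economical even if less direct.
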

The opposite implication does not hold, since the piecewise convergence depends on the order of the eigenvalues, whereas the spectral symbol depends only on their values.

$ $\\

The space of matrix sequences that admit a singular value symbol on a fixed domain $D$ has been shown to be closed with respect to a notion of convergence called the \textit{Approximating Classes of Sequences} (a.c.s.) convergence. This notion and result are due to Serra-Capizzano \cite{ACS}, but were actually inspired by Tilli’s pioneering paper on LT sequences \cite{Tilli}. 
Given a sequence of matrix sequences $\B$, it is said to be a.c.s. convergent to $\serie A$ if there exists a sequence $\{N_{n,m}\}_{n,m}$ of "small norm" matrices and a sequence  $\{R_{n,m}\}_{n,m}$ of "small rank" matrices such that for every $m$ there exists $n_m$ with
\[
A_n = B_{n,m}  + N_{n,m} + R_{n,m}, \qquad \|N_{n,m}\|\le \omega(m), \qquad \rk(R_{n,m})\le nc(m)
\]
for every $n>n_m$, and
\[
\omega(m)\xrightarrow{m\to \infty} 0,\qquad c(m)\xrightarrow{m\to \infty} 0.
\]
In this case, we will use the notation $\B\acs \serie A$. 

It has been observed that the sequences $\serie A$ arising from differential equations can often be obtained through an a.c.s. limit of sums of products of special diagonal and Toeplitz sequences, for which we can easily deduce the spectral symbol. This justifies the interest in the space of Generalized Locally Toeplitz (GLT) sequences, which contains both the before-mentioned class of sequences, gains the structure of a $\f C$-algebra, and is closed with respect to the a.c.s. convergence. We will report only a few properties of this space, but for a detailed presentation of the GLT sequences and their applications refer to \cite{BS},\cite{GLT},\cite{Tilli},\cite{GSM},\cite{GS} and references therein.

For every GLT matrix sequence $\serie A$ one of its singular value symbol $k(x)$ is chosen, called \textit{GLT symbol}, and denoted as $\serie A\GLT k(x)$. Key examples of diagonal GLT sequences are $\sdiag{a}$, where $a:[0,1]\to \f C$ is an almost everywhere (a.e.) continuous function, and
\[
D_n(a) = \underset{i=1,\dots,n}{\text{diag}} a\left(\frac{i}{n}\right) = 
\begin{pmatrix}
a\left(\frac{1}{n}\right) & & & \\
& a\left(\frac{2}{n}\right) & & \\
& & \ddots & \\
& & & a(1)
\end{pmatrix}
\]
It is easy to verify that $\sdiag a \sim_\sigma a(x)\otimes 1$, where 
\[
a(x) \otimes 1 : [0,1]\times [-\pi,\pi] \to \f C
\]
is a two-variable function that is constant with respect to the second variable. This function is chosen as the GLT symbol of $\sdiag a$.  

In the case of diagonal matrix sequences, the choice of one symbol can be seen as a particular sorting of their eigenvalues, as expressed in the following theorems, proved in Section 3 and 4.

\begin{theorem*}
Given a diagonal sequence $\serie D$ and a measurable function $ k:[0,1]\to \f C $, then
\[
 \serie D\tratti k(x)\iff \serie D\GLT k(x)\otimes 1
\]
\end{theorem*}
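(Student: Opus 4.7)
The strategy in both directions is the same: approximate the measurable $k:[0,1]\to\f C$ in measure by a sequence of continuous functions $a_m$, for which $\sdiag{a_m}\GLT a_m(x)\otimes 1$ is already known, and then bridge the gap between $\serie D$ and $\sdiag{a_m}$ by an a.c.s.\ estimate whose strength matches precisely the piecewise convergence. The central technical observation is that $D_n - D_n(a_m)$ is itself diagonal, so information about its diagonal entries can be exchanged freely with information about its singular values.

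\textit{Forward direction ($\tratti \Longrightarrow\;\GLT$).} Choose continuous $a_m\to k$ in measure via Lusin. Since the GLT class is closed under a.c.s.\ convergence when the symbols themselves converge in measure, and $a_m(x)\otimes 1\to k(x)\otimes 1$, it suffices to verify $\sdiag{a_m}\acs\serie D$. Fix $m$ and split the diagonal matrix $E_{n,m}:=D_n-D_n(a_m)$ into a diagonal part $N_{n,m}$ collecting all entries of modulus $\le \omega(m)$ and its diagonal complement $R_{n,m}$: the former has norm $\le \omega(m)$, while $\rk(R_{n,m})=\#\{i : |d_i^{(n)}-a_m(i/n)|>\omega(m)\}$. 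Because $a_m$ is continuous, the interpolants of $a_m(i/n)$ converge uniformly to $a_m$, and by hypothesis the interpolants of $d_i^{(n)}$ converge in measure to $k$; consequently this count is bounded by $n$ times a quantity tending to $\mu\{|k-a_m|>\omega(m)\}$. A diagonal choice of $\omega(m)\to 0$ along which this last measure vanishes produces the required a.c.s.\ bound.

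\textit{Backward direction ($\GLT \Longrightarrow\;\tratti$).} By the structural characterization of GLT sequences one obtains continuous $a_m\to k$ in measure with $\sdiag{a_m}\acs\serie D$, so $E_{n,m}=N_{n,m}+R_{n,m}$ with $\|N_{n,m}\|\le\omega(m)$ and $\rk(R_{n,m})\le nc(m)$, both rates vanishing with $m$. Although $N_{n,m}$ and $R_{n,m}$ are not a priori diagonal, $E_{n,m}$ is, and Weyl's singular value inequality $\sigma_{r+1}(E_{n,m})\le\sigma_1(N_{n,m})+\sigma_{r+1}(R_{n,m})$ with $r=\rk(R_{n,m})$ yields that at most $nc(m)$ of the singular values of $E_{n,m}$, i.e.\ at most $nc(m)$ of the moduli $|d_i^{(n)}-a_m(i/n)|$, exceed $\omega(m)$. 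Transferring indices back to $[0,1]$ and accounting for the piecewise linear interpolation between adjacent nodes, the set $\{x : |k_n(x)-(a_m)_n(x)|>\omega(m)\}$ has measure at most $c(m)+O(1/n)$. Combined with the uniform convergence $(a_m)_n\to a_m$ and with $a_m\to k$ in measure, a standard $3\varepsilon$ argument produces $k_n\to k$ in measure, i.e.\ $\serie D\tratti k$.

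\textit{Main obstacle.} The delicate step is the rank-to-diagonal-entry transfer in the backward direction: a generic low-rank matrix can have many nonzero diagonal entries, so the a.c.s.\ decomposition of $E_{n,m}$ does not automatically decompose its diagonal into low-rank and small-norm parts. The rescue is Weyl's singular value inequality combined with the fact that for a diagonal matrix the singular values are exactly the sorted moduli of the diagonal. A secondary technicality is the piecewise linear (rather than stepwise) nature of $k_n$, which forces a mild enlargement of the exceptional set to cover interpolations straddling a "good" and a "bad" node; in the forward direction the analogous care is needed when choosing $\omega(m)$ and $c(m)$ so that the bound on bad indices is uniform in $n$ despite $k_n\to k$ coming without a rate.
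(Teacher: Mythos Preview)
Your argument is correct, and it takes a genuinely different route from the paper. The paper first invokes Lemma~\ref{mn} (which in turn rests on the a.c.s.\ completeness of Theorem~\ref{complete}) to extract a \emph{single} reference sequence $\{D_n(k_{m(n)})\}_n$ that simultaneously satisfies $\GLT k\otimes 1$ and $\tratti k$; both implications are then reduced, via the algebra structure of GLT and the linearity of piecewise convergence, to the zero case $\{Z_n\}_n\GLT 0 \iff \{Z_n\}_n\tratti 0$ handled by Lemma~\ref{zerotratti}. You instead keep the full family $\{\sdiag{a_m}\}_m$ and argue each direction through an explicit a.c.s.\ estimate: forward via the node-count $\to$ measure inequality (this is the $1\Rightarrow 2$ half of Lemma~\ref{zerotratti}, applied to $k_n-(a_m)_n$ rather than to a sequence converging to $0$), backward via the isometry $d_{acs}=d_M$ of Theorem~\ref{S}(3) together with the measure $\to$ node-count inequality (the $2\Rightarrow 1$ half). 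Two small remarks: (i) the phrase ``structural characterization of GLT sequences'' in your backward direction overstates what is needed---once $a_m\to k$ in measure, $\sdiag{a_m}\acs\serie D$ follows immediately from the isometry in Theorem~\ref{S}(3); (ii) your ``main obstacle'' is milder than you suggest, since for a diagonal $E_{n,m}$ the optimal svd-based splitting of Lemma~\ref{spac2} is automatically diagonal (as the paper notes right after that lemma), so Weyl is not strictly required---but your use of it is of course valid. The trade-off is that the paper's proof is shorter and perfectly symmetric but imports the nontrivial diagonal-extraction machinery of Theorem~\ref{complete}, whereas your argument is more self-contained and makes the quantitative link between a.c.s.\ distance and piecewise convergence transparent at the cost of unwrapping both halves of Lemma~\ref{zerotratti} in a slightly more general form.
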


\begin{theorem*}
Given a real diagonal sequence $\serie D$ and one of its spectral symbols $ k:[0,1]\to \f R $, then
$$\{P_nD_nP_n^T\}\GLT k(x)\otimes 1$$
where $P_n$ are permutation matrices.
\end{theorem*}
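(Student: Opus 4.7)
I would apply the equivalence $\{P_n D_n P_n^T\}\GLT k(x)\otimes 1 \iff \{P_n D_n P_n^T\}\tratti k(x)$ established in the previous theorem, reducing the problem to finding permutations $P_n$ for which the piecewise linear interpolations of the permuted diagonals converge in measure to $k$. My strategy is to factor $P_n = R_n Q_n$, where $Q_n$ sorts the diagonal of $D_n$ nondecreasingly -- producing the monotone rearrangement $k^\ast$ of $k$ -- and $R_n$ is a permutation approximation of a measure-preserving change of variables realizing $k = k^\ast \circ \tau$.

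\textbf{Sorting step.} Let $k^\ast:[0,1]\to\f R$ be the nondecreasing rearrangement of $k$. Since $k$ and $k^\ast$ share the same distribution, $\serie D\sim_\lambda k^\ast$ as well. Let $Q_n$ be the permutation sorting the diagonal of $D_n$ nondecreasingly, and set $D_n^\uparrow := Q_n D_n Q_n^T$. The piecewise linear interpolation of $D_n^\uparrow$ is (up to an endpoint correction) the left-continuous quantile function of the empirical measure $\tfrac{1}{n}\sum_i\delta_{d_i^{(n)}}$. The spectral-symbol hypothesis gives weak convergence of these empirical measures to the pushforward of Lebesgue on $[0,1]$ by $k$, so by a classical quantile-convergence argument the interpolations converge a.e., hence in measure, to the monotone $k^\ast$. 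Thus $\{D_n^\uparrow\}\tratti k^\ast$.

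\textbf{Rearrangement step.} By Ryff's theorem there exists a measurable, measure-preserving $\tau:[0,1]\to[0,1]$ with $k = k^\ast\circ\tau$ a.e. Partition $[0,1]$ into the cells $I_j^{(n)} = [(j-1)/n, j/n]$; the matrix $T_n$ with entries $n\,\mu(\tau^{-1}(I_i^{(n)})\cap I_j^{(n)})$ is doubly stochastic, and by the Birkhoff--von Neumann theorem one can extract a permutation matrix $R_n$ whose associated $\pi_n:\{1,\dots,n\}\to\{1,\dots,n\}$ satisfies $\pi_n(i)/n \approx \tau(i/n)$ in measure. Setting $P_n := R_n Q_n$, each diagonal entry of $P_n D_n P_n^T$ has the form $d^\uparrow_{\pi_n(i)} \approx k^\ast(\pi_n(i)/n) \approx k^\ast(\tau(i/n)) = k(i/n)$, where the first estimate comes from the sorting step and the second from the a.e.\ continuity of the monotone function $k^\ast$ combined with the measure-closeness of $\pi_n$ to $\tau$.

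\textbf{Main obstacle.} The delicate point is the rearrangement step: a measure-preserving map can be highly non-regular, and turning ``$\pi_n$ approximates $\tau$'' into ``the interpolation of $P_n D_n P_n^T$ approaches $k$ in measure'' needs care. The cleanest route I foresee is to prove the theorem first for simple $k$ taking finitely many values -- where $k^\ast$ and $\tau$ are piecewise constant on a rational partition and the permutation approximation is essentially forced -- obtaining permutations $P_n^{(m)}$ with $\{P_n^{(m)} D_n (P_n^{(m)})^T\}_n \tratti k_m$ for simple $k_m \to k$ in measure. A diagonal extraction $P_n := P_n^{(m(n))}$ with $m(n)\to\infty$ sufficiently slowly then yields the desired sequence, using that $\tratti$-convergence behaves like a.c.s.\ convergence in the diagonal setting, so that the standard diagonal argument for closure under approximation applies.
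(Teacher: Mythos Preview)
Your sorting step coincides with the paper's: Lemma~4.1 (decreasing rearrangement preserves the spectral symbol) together with Lemma~4.2 (for monotone $g$ and monotone interpolants, $\sim_\lambda g\Rightarrow\tratti g$) give $\{Q_nD_nQ_n^T\}_n\tratti g$, hence $\GLT g\otimes 1$, where $g$ is the monotone rearrangement of $k$.

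The rearrangement step is where you and the paper diverge. You attempt to discretize a measure-preserving $\tau$ with $k=k^\ast\circ\tau$; the paper sidesteps this entirely. Instead it builds an \emph{auxiliary} diagonal sequence $\{D'_n\}_n$ with $\{D'_n\}_n\GLT k\otimes 1$ directly, via Lemma~3.4 (continuous approximants $k_m\to k$ and completeness of the GLT space). Sorting $D'_n$ by permutations $S_n$ yields $\{S_nD'_nS_n^T\}_n\GLT g\otimes 1$ by the same argument as above, so the GLT algebra gives $\{S_nD'_nS_n^T-Q_nD_nQ_n^T\}_n\GLT 0$, i.e.\ zero-distributed. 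Conjugating simultaneously by $S_n^T$ leaves the eigenvalues (hence zero-distribution) unchanged, so $\{D'_n-S_n^TQ_nD_nQ_n^TS_n\}_n\GLT 0$, and therefore $\{S_n^TQ_nD_nQ_n^TS_n\}_n\GLT k\otimes 1$. The permutation is $P_n=S_n^TQ_n$. The point is that the ``unsorting'' permutations $S_n$ are harvested from a second sequence already known to have GLT symbol $k$; no approximation of any measure-preserving map is needed.

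On your route: the Birkhoff--von Neumann step, as written, does not yield what you claim. The theorem says the doubly stochastic $T_n$ is a \emph{convex combination} of permutation matrices; it gives no single extremal $R_n$ whose $\pi_n(i)/n$ is close to $\tau(i/n)$ in measure. Your fallback via step functions on rational partitions is sounder---there $\tau$ can be taken to be an interval exchange, which is straightforwardly discretized---and combined with a diagonal extraction it can be pushed through, but it is noticeably heavier than the paper's two-sequence comparison trick, and you would still need to argue that the composition $k^\ast\circ(\text{discretized }\tau_m)$ tracks $k$ and not merely $k_m$ (since $D_n^\uparrow$ approximates $k^\ast$, not $k_m^\ast$).
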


The theory of piecewise convergence is not enough in order to prove the complex version of the last result, since the field of complex numbers does not have a natural order. The result holds nonetheless, and we will prove it in a separate document.

\section{A.c.s. and GLT} 

In this section, we introduce some technical results that will be used in the other sections. Moreover, we recall some known lemmas and theorems about spectral symbols, acs convergence and GLT sequences.

\subsection{A.c.s. pseudometric}

Given a matrix $A\in\f C^{n\times n}$, we can define the function
\[
p(A):= \min_{i=1,\dots,n+1}\left\{ \frac{i-1}{n} + \sigma_i(A) \right\}
\]
where $\sigma_1(A)\ge \sigma_2(A)\ge \dots \ge \sigma_n(A)$ are the singular values of $A$, and by convention $\sigma_{n+1}(A)=0$. Another way to compute this quantity makes use of the singular value decomposition (svd) of $A$. In fact, if $A=U\Sigma V$ where $\Sigma$ is a diagonal matrix containing the singular vales of $A$ sorted in a decreasing order, we can decompose it into the sum of two diagonal matrices
\[
\Sigma= 
\begin{pmatrix}
\sigma_1 & & & \\
& \sigma_2 & & \\
& & \ddots & \\
& & & \sigma_n
\end{pmatrix}
= \wt{\Sigma}^{(i)} + \wh{\Sigma}^{(i)}
\]

\[
\wt\Sigma^{(i)}= 
\begin{pmatrix}
\sigma_1 & & & & & \\
& \ddots & & & &\\
& & \sigma_{i-1} & & &\\
& & & 0 & & \\
& & &  & \ddots & \\
& & &  & & 0\\
\end{pmatrix}
\qquad
\wh\Sigma^{(i)}= 
\begin{pmatrix}
0 & & & & & \\
& \ddots & & & &\\
& & 0 & & &\\
& & & \sigma_{i} & & \\
& & &  & \ddots & \\
& & &  & & \sigma_n\\
\end{pmatrix}.
\] 
So we get that for all $i=1,\dots,n+1$ it holds
$
A = \wt A^{(i)} + \wh A^{(i)} := U\wt\Sigma^{(i)} V + U\wh\Sigma^{(i)} V
$
and
\[
p(A) = \min_{i=1,\dots,n+1}\left\{ \frac{i-1}{n} + \sigma_i(A) \right\} = \min_{i=1,\dots,n+1}\left\{ \frac{\rk(\wt A^{(i)})}{n} + \|\wh A^{(i)}\| \right\}
\]
When $\serie A$ is a sequence with singular value symbol zero, and in this case we call it \textit{zero-distributed}, Theorem 3.2 in \cite{GS} says something more on the sequences $\wt A_n^{(i_n)}$ and $\wh A_n^{(i_n)}$, where $i_n$ are the indexes that minimize 
\[
 \frac{\rk(\wt A_n^{(i)})}{n} + \|\wh A_n^{(i)}\|.
\]

\begin{lemma}\label{spac}
Let $\serie A$ be a matrix sequence, and let $\wt A_n:=\wt A_n^{(i_n)}$, $\bar A_n:=\bar A_n^{(i_n)}$ be the sequences that minimize 
\[
 \frac{\rk(\wt A_n^{(i)})}{n} + \|\wh A_n^{(i)}\|.
\]
The following statement are equivalent.
\begin{itemize}
\item 
$\hfill \displaystyle \serie A\sim_\sigma 0 \hfill$
\item $\hfill \displaystyle 
\rk(\wt A_n) = o(n), \qquad \|\wh A_n\| = o(1)
\hfill$
\item $\hfill \displaystyle
\lim_{n\to\infty} \frac{\#\{ i: \sigma_i(A_n)>\ve \}}{n} = 0 \qquad \forall \ve >0
 \hfill$
\end{itemize}
\end{lemma}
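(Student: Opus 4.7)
My plan is to establish the cyclic chain (2)~$\Leftrightarrow$~(3)~$\Leftrightarrow$~(1), with (3) serving as the bridge: linking to (2) is a direct manipulation of the optimum $p(A_n)$, and linking to (1) is a standard approximation-of-test-function argument near $0$.

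For (2)~$\Leftrightarrow$~(3), I would note that $\rk(\wt A_n)/n$ and $\|\wh A_n\|$ are nonnegative with sum exactly $p(A_n)$, so (2) is equivalent to $p(A_n)\to 0$. If $p(A_n)<\ve$ with minimizer $i_n$, then each summand is below $\ve$; since $\sigma_i(A_n)$ is decreasing in $i$, it follows that $\#\{i:\sigma_i(A_n)>\ve\}\le i_n-1<\ve n$, giving (3). Conversely, for $\delta>0$ set $\ve=\delta/2$; (3) implies $\#\{i:\sigma_i(A_n)>\ve\}<\ve n$ eventually, and inserting $i_n:=\#\{i:\sigma_i(A_n)>\ve\}+1$ into the definition of $p$ gives $p(A_n)<2\ve=\delta$.

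For (3)~$\Rightarrow$~(1), I would fix a continuous compactly supported $F$, use uniform continuity to produce, for each $\eta>0$, a $\ve>0$ with $|F(t)-F(0)|<\eta$ on $[0,\ve]$, and bound
\[
\left|\frac{1}{n}\sum_{i=1}^n F(\sigma_i(A_n))-F(0)\right| \le \eta + 2\|F\|_\infty\cdot\frac{\#\{i:\sigma_i(A_n)>\ve\}}{n},
\]
whose right-hand side tends to $\eta$ by (3); letting $\eta\to 0$ yields the singular value symbol condition with symbol $\equiv 0$. For (1)~$\Rightarrow$~(3), I would fix $\ve,\delta>0$ and any $K>\ve$ and apply two test functions. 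A continuous compactly supported tent $G$ with $G(0)=1$, $0\le G\le 1$, $G\equiv 0$ on $[K,\infty)$ satisfies $G(\sigma_i(A_n))\le\chi_{\sigma_i<K}$, so $\frac{1}{n}\sum G(\sigma_i(A_n))\to G(0)=1$ forces $\#\{i:\sigma_i(A_n)\ge K\}/n\to 0$. Next, a continuous compactly supported $F\ge 0$ with $F(0)=0$ and $F\ge\chi_{[\ve,K]}$ forces $\#\{i:\sigma_i(A_n)\in[\ve,K]\}/n\to 0$ by the same averaging. Summing the two fractions majorizes $\#\{i:\sigma_i(A_n)>\ve\}/n$, and (3) follows.

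The main obstacle is the tail estimate in (1)~$\Rightarrow$~(3): because $\sim_\sigma 0$ is tested only against compactly supported functions, arbitrarily large singular values could in principle evade every test window. The $G$-trick---converting the disappearance of mass at infinity into a positive lower bound on the density of small singular values---is the device that closes this gap, after which the bounded range is handled by a routine bump-function argument.
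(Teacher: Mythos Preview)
The paper does not actually prove this lemma; it is stated as a citation of Theorem~3.2 in \cite{GS} and no argument is given in the text. So there is no in-paper proof to compare against.

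Your argument is correct. The reduction of (2) to the single condition $p(A_n)\to 0$ is the right observation, and your two-sided link with (3) via choosing appropriate indices in the definition of $p$ is clean. The implication (3)~$\Rightarrow$~(1) is the standard splitting of the average into small and large singular values, and your (1)~$\Rightarrow$~(3) correctly handles the only nontrivial point, namely that compactly supported test functions cannot directly detect mass at infinity: your tent function $G$ with $G(0)=1$ forces the fraction of singular values $\ge K$ to vanish, after which a bump supported on $[\ve,K]$ handles the bounded range. One cosmetic remark: the parameter $\delta$ you introduce at the start of (1)~$\Rightarrow$~(3) is never used and can be dropped.
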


The function $p(A)$ is subadditive, so we can introduce the pseudometric $d_{acs}$ on the space of matrix sequences
\[
\dacs{\serie{A}}{\serie{B}} = \limsup_{n\to \infty} p(A_n-B_n).
\]
It has been proved (\cite{bott},\cite{Garoni}) that this distance induces the a.c.s. convergence already introduced. In other words,
\[
\dacs{\serie A}{\B} \xrightarrow{m\to \infty} 0 \iff  \B\acs \serie A.
\]
This result holds since the function $p$ embodies the concept of acs convergence. In fact it is equivalent to
\[
p(A)= \inf \left\{ \frac{\rk(R)}{n} +\|N\|: \quad A = R+N  \right\}.
\]
This justifies the following result:
\begin{lemma}\label{spac2}
Let $\B$ and $\serie A$ be matrix sequences and let 
\[
R_{n,m} := \wt{A_n-B_{n,m}}^{(i_n)}, \qquad N_{n,m} := \wh{B_{n,m}-A_n}^{(i_n)},
\] 
where $i_n$ are the indexes that minimize 
\[
 \frac{\rk(\wt{ A_n - B_{n,m}}^{(i)})}{n} + \|\wh {A_n - B_{n,m}}^{(i)}\|.
\]
Then $\B\acs \serie A$ if and only if for every $m$ there exists $n_m$ with
\[
A_n = B_{n,m}  + N_{n,m} + R_{n,m}, \qquad \|N_{n,m}\|\le \omega(m), \qquad \rk(R_{n,m})\le nc(m)
\]
for every $n>n_m$, and
\[
\omega(m)\xrightarrow{m\to \infty} 0,\qquad c(m)\xrightarrow{m\to \infty} 0.
\]
\end{lemma}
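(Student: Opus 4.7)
The plan is to exploit the two characterisations of $p$ recorded just before the lemma. On one hand, the formula $p(A) = \min_i\{(i-1)/n + \sigma_i(A)\}$ identifies the optimising index $i_n$ used to define $R_{n,m}$ and $N_{n,m}$, and yields the \emph{equality}
\[
p(A_n - B_{n,m}) \;=\; \frac{\rk(R_{n,m})}{n} + \|N_{n,m}\|.
\]
On the other hand, the infimum characterisation $p(A) = \inf\{\rk(R)/n + \|N\| : A = R+N\}$ says that \emph{any} admissible splitting gives an upper bound on $p$. The rest of the proof consists of feeding these into the equivalence $\dacs{\serie A}{\B}\to 0 \iff \B\acs \serie A$ that was also stated just above.

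For the forward direction I would assume $\B \acs \serie A$, so that $\delta'_m := \limsup_n p(A_n - B_{n,m}) \to 0$ as $m\to\infty$. Set $\delta_m := \delta'_m + 1/m$; by definition of limsup there is an $n_m$ with $p(A_n - B_{n,m}) < \delta_m$ for every $n>n_m$. Using the equality displayed above, and the fact that both summands are non-negative, I would then read off
\[
\frac{\rk(R_{n,m})}{n} < \delta_m, \qquad \|N_{n,m}\| < \delta_m,
\]
so the choice $\omega(m) = c(m) = \delta_m$ realises the a.c.s. decomposition with the specific $R_{n,m}$ and $N_{n,m}$ of the statement.

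For the converse, I would use the infimum form of $p$: since $A_n - B_{n,m} = R_{n,m} + N_{n,m}$ is an admissible splitting,
\[
p(A_n - B_{n,m}) \;\le\; \frac{\rk(R_{n,m})}{n} + \|N_{n,m}\| \;\le\; c(m) + \omega(m)
\]
for all $n>n_m$. Taking $\limsup_n$ and then letting $m\to\infty$ gives $\dacs{\serie A}{\serie{B_m}} \to 0$, and the cited equivalence returns $\B \acs \serie A$.

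I do not expect any substantial obstacle: the whole argument rests on the fact that the prescribed $R_{n,m}, N_{n,m}$ already \emph{attain} the infimum defining $p$, so whenever a.c.s. convergence provides some good splitting, this canonical one is automatically at least as good, with the two summands controlled separately because each is dominated by their (non-negative) sum.
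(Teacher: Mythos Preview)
Your proposal is correct and follows essentially the same route as the paper: both arguments rest on the identity $p(A_n-B_{n,m}) = \rk(R_{n,m})/n + \|N_{n,m}\|$ for the optimising split, the non-negativity of the two summands, and the equivalence between a.c.s.\ convergence and $d_{acs}\to 0$. The paper packages this as a single chain of biconditionals rather than two separate directions, but the content is the same.
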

\begin{proof}
Notice that 
\begin{align*}
\B\acs \serie A &\iff \dacs{\serie A}{\B} \xrightarrow{m\to \infty} 0 \\
&\iff  \limsup_{n\to \infty} p(A_n-B_{n,m})  \xrightarrow{m\to \infty} 0\\
& \iff \limsup_{n\to \infty} p(N_{n,m} + R_{n,m})  \xrightarrow{m\to \infty} 0\\
& \iff  \limsup_{n\to \infty}\|N_{n,m}\| + \frac{\rk(R_{n,m})}{n}  \xrightarrow{m\to \infty} 0
\end{align*}
\[ 
\iff   \limsup_{n\to \infty}\|N_{n,m}\| \xrightarrow{m\to \infty} 0, \qquad  \limsup_{n\to \infty}  \frac{\rk(R_{n,m})}{n} \xrightarrow{m\to \infty} 0.
 \]
The last two conditions are true if and only if for every $m$ there exists $n_m$ with
\[
\|N_{n,m}\|\le \omega(m), \qquad \rk(R_{n,m})\le nc(m) \qquad \forall n>n_m
\]
where
\[
\omega(m)\xrightarrow{m\to \infty} 0,\qquad c(m)\xrightarrow{m\to \infty} 0.
\]
\end{proof}

Notice that if $A_n$ is a diagonal matrix, then   $\wt A_n^{(i_n)}$ and $\wh A_n^{(i_n)}$ are also diagonal matrices. If a diagonal sequence $\serie D$ is zero distributed, thanks to Lemma \ref{spac}, we can find $\wt D_n^{(i_n)}$ and $\wh D_n^{(i_n)}$ respectively low rank and low norm diagonal matrices that sums up to $D_n$. Moreover, using Lemma \ref{spac2}, given $\{\{D_{n,m}\}_n\}_m\acs \serie D$ diagonal sequences we can find $R_{n,m}$ and $N_{n,m}$ respectively low rank and low norm diagonal matrices such that $D_n- D_{n,m} = N_{n,m} + R_{n,m}$.

$ $\\
In \cite{Mio}, has been proved that the pseudometric $d_{acs}$ on the space of matrix sequences is complete. Using the same arguments, we can prove the following more detailed statement.
\begin{theorem}\label{complete}
Let $\B$ be a sequence of matrix sequences that is a Cauchy sequence with respect to the pseudometric $d_{acs}$. There exists a crescent map $m:\f N\to \f N$ with $\lim_{n\to\infty} m(n) = \infty$ such that for every crescent map $m':\f N\to \f N$ that respects
\begin{itemize}
\item $m'(n)\le m(n) \quad \forall n $
\item $\lim_{n\to\infty} m'(n) = \infty$
\end{itemize}
we get
\[
\B \acs \{ B_{n,m'(n)} \}_n.
\]
In particular, the pseudometric $d_{acs}$ is complete.
\end{theorem}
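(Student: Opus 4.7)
The plan is a variant of the classical completeness construction for metric spaces, strengthened to produce an upper bound $m(n)$ flexible enough that any slower-growing crescent $m'$ beneath it still yields an a.c.s.\ limit. First, I would extract from the Cauchy hypothesis a strictly increasing sequence $M_k \to \infty$ such that $\limsup_n p(B_{n,m_1}-B_{n,m_2}) < 2^{-k}$ for all $m_1, m_2 \ge M_k$. Since there are only finitely many integer pairs $(m_1, m_2)$ with $M_k \le m_1, m_2 \le M_{k+1}$, the limsup bound can be upgraded to a pointwise one: choose $\tilde N_k$ so that $p(B_{n,m_1}-B_{n,m_2}) < 2^{-k+1}$ for every such pair and every $n \ge \tilde N_k$. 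Setting $N_k := \max(\tilde N_1, \dots, \tilde N_k) + k$ produces a strictly increasing sequence with $N_k \to \infty$, and I define $m(n) := M_k$ for $n \in [N_k, N_{k+1})$, which is crescent with $m(n) \to \infty$.

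Then given a crescent $m'$ with $m'(n) \le m(n)$ and $m'(n) \to \infty$, the goal is to show $\limsup_n p(B_{n,m'(n)}-B_{n,m_0}) \to 0$ as $m_0 \to \infty$. Fix $m_0$ and pick $L$ with $M_L \le m_0 < M_{L+1}$. For $n$ large enough that $m'(n) \ge M_L$ and $n \in [N_k, N_{k+1})$ for some $k \ge L$, both $m'(n)$ and $m_0$ lie in $[M_L, M_k]$ and $n \ge \tilde N_j$ for every $j \le k$. Subadditivity of $p$ then lets me telescope through $M_L, M_{L+1}, \dots, M_k$: each consecutive step is bounded by $2^{-i+1}$ and the two boundary steps (connecting $m_0$ to $M_L$ and $m'(n)$ to the nearest $M_j$) lie inside one of the controlled scales. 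The total is of order $2^{-L}$, so letting $m_0 \to \infty$ forces $L \to \infty$ and yields $\B \acs \{B_{n, m'(n)}\}_n$. Completeness of $d_{acs}$ then follows at once by taking $m' = m$, which trivially satisfies the hypotheses.

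The main obstacle is converting the Cauchy hypothesis's limsup bounds into pointwise-in-$n$ bounds uniform over the many pairs arising as $n$ varies and $m'(n)$ moves within $[1, m(n)]$. The step-function choice of $m(n)$, together with the finiteness of integer pairs at each scale $k$, is exactly what allows the triangle inequality along the $M_i$'s to close the argument for any admissible $m'$ below $m$; without this finite-set upgrade, one would only control pairs of the form $(M_k, M_{k+1})$ and not the intermediate integers that $m'(n)$ may take.
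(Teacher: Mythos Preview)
Your proposal is correct and follows essentially the same route as the paper: extract the $M_k$ from the Cauchy hypothesis, use finiteness of integer pairs in $[M_k,M_{k+1}]$ to upgrade the limsup bounds to pointwise ones past some $\tilde N_k$, define $m(n)$ as the step function $m(n)=M_k$ on $[N_k,N_{k+1})$, and then telescope via the subadditivity of $p$ along the $M_i$'s to bound $p(B_{n,m'(n)}-B_{n,m_0})$ by a geometric tail of order $2^{-L}$. The only differences from the paper are cosmetic (your $N_k:=\max(\tilde N_1,\dots,\tilde N_k)+k$ versus the paper's recursive $N_k=\max\{\tilde N_k,N_{k-1}+1\}$, and slightly different indexing conventions).
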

\begin{proof}
By definition of Cauchy sequence, for every integer $k>0$, there exists an index $M_k$ such that
\[ 
\dacs{\{B_{n,s}\}_n}{\{B_{n,t}\}_n}\le 2^{-k} \qquad  \forall s,t\ge M_k.
 \]
We can suppose that $M_k$ are strictly increasing accordingly to $k$. Let us fix $k>0$ and consider all the couple of distinct sequences $(\{B_{n,s}\}_n,\{B_{n,t}\}_n)$ with $ M_{k+1} \ge s,t\ge M_k$. Their  is less then $2^{-k}$, so we can use the definition of distance
\[
\dacs{\{B_{n,s}\}_n}{\{B_{n,t}\}_n} = \limsup_{n\to \infty} p(B_{n,s}-B_{n,t})\le 2^{-k}
\]
and obtain that there exists an index $N_{s,t}$ such that
\[
p(B_{n,s}-B_{n,t})\le 2^{1-k} \qquad \forall n \ge N_{s,t}.
\]
Since the number of couples $(s,t)$ between $M_k$ and $M_{k+1}$ is finite, we can define the minimum of $N_{s,t}$ as
\[ 
\wt N_k := \min_{M_{k+1} \ge s,t\ge M_k} N_{s,t} 
 \]
 and obtain a strictly increasing sequence $N_k$ by the following recursive definition
 \[
 N_1 = \wt N_1, \qquad N_k = \max\{\wt N_k, N_{k-1}+1 \} \quad \forall k>1.
 \]
Notice that, given $k>0$, $n\ge N_k$ and   $ M_{k+1} \ge s,t\ge M_k$ we have
\begin{equation}\label{one}
n\ge N_k\ge \wt N_k\ge N_{s,t} \implies   p(B_{n,s}-B_{n,t}) \le 2^{1-k}. 
\end{equation}
We can now define the function 
\[
m(n):=
\begin{cases}
1 & N_{1}>n \\
M_k & N_{k+1}>n\ge N_k \quad \forall k>0
\end{cases}
\]
The map $m(n)$ is surely increasing, and it diverges to infinite as $M_k$ are strictly increasing. 
Let $m':\f N\to \f N$ be a crescent map with
\begin{itemize}
\item $m'(n)\le m(n) \quad \forall n $
\item $\lim_{n\to\infty} m'(n) = \infty$
\end{itemize}
and set $A_n:= \{B_{n,m'(n)}\}_n$.
We can now prove that $\B$ converges acs to $\serie A$. 
Let $m$ be a fixed index with $M_{k}\le m < M_{k+1}$.
Let $n$ be a fixed index with $N_{k+1}\le N_s \le n < N_{s+1}$ and since $m'(n)$ is not bounded, we can also assume $m'(n)\ge m$.
There also exists a $t$ such that $M_{k}\le M_{t}\le m'(n) < M_{t+1}$, and 
\[
M_t\le m'(n)\le m(n) = M_s \implies t\le s.
\]
Since the function $p$ is subadditive, we get
\[
 p(A_{n}-B_{n,m}) = p(B_{n,m'(n)} - B_{n,m}) \le\]
 \[
 \le p(B_{n,m'(n)} - B_{n,M_{t}}) +  \sum_{r=k+1}^{t-1} p(B_{n,M_{r+1}} - B_{n,M_{r}}) + p(B_{n,M_{k+1}} - B_{n,m}).
\]
Using (\ref{one}), we know that
\[
n \ge N_s\ge N_t, \quad M_{t+1}> m'(n) \ge M_{t} \implies  p(B_{n,M_{k+1}} - B_{n,m}) \le 2^{1-t}
\]
\[
n\ge N_s\ge N_t > N_r \implies p(B_{n,M_{r+1}} - B_{n,M_{r}})\le 2^{1-r} \qquad \forall k< r<t
\]
\[
n\ge N_{k+1} >N_k, \quad M_{k+1}> m \ge M_{k}   \implies  p(B_{n,M_{k+1}} - B_{n,m}) \le 2^{1-k}
\]
so we get
\[
p(B_{n,m'(n)} - B_{n,M_{t}}) +  \sum_{r=k+1}^{t-1} p(B_{n,M_{r+1}} - B_{n,M_{r}}) + p(B_{n,M_{k+1}} - B_{n,m})\]
\[
\le 2^{1-t} + \sum_{r=k+1}^{t-1} 2^{1-r} + 2^{1-k}
= \sum_{r=k}^{t} 2^{1-r} \le \sum_{r=k}^{\infty} 2^{1-r} = 4 \cdot 2^{-k}.
\]
This means that for all $m$ with $M_{k}\le m < M_{k+1}$ and for all $n$ with $N_{k+1}\le  n$ and such that $m'(n)\ge m$, we get
\[
p(A_{n}-B_{n,m}) \le 4 \cdot 2^{-k}
\]
so
\[
\dacs{\serie A}{\{B_{n,m}\}_n} = \limsup_{n\to \infty} p(A_n-B_{n,m}) \le 4 \cdot 2^{-k} \qquad \forall M_{k}\le m < M_{k+1}
\]
\[
\implies \limsup_{m\to \infty} \dacs{\serie A}{\{B_{n,m}\}_n} \le \limsup_{k\to \infty}4 \cdot 2^{-k} = 0.
\]
This entails that $\B\acs \serie A$.
\end{proof}

\subsection{Diagonal Sequences and Spectral Symbols}

The acs convergence has nice properties linked to the singular value symbols and GLT symbols of the sequences, as we will see in the next subsection. The spectral symbols behave nicely only on the sequences of hermitian matrices, but when we work with diagonal matrices, not necessarily with real entries, we can also regain some properties.

\begin{lemma}\label{lambdacomp}
Given any almost everywhere continuous function $a:[0,1] \to \f C$, then
\[ 
D_n(a)\sim_\lambda a(x).
 \]
\end{lemma}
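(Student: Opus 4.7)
The plan is direct: since $D_n(a)$ is diagonal, its eigenvalues are exactly $a(1/n),a(2/n),\dots,a(n/n)$, so the quantity appearing in the definition of spectral symbol reduces to a Riemann-type sum
\[
\frac{1}{n}\sum_{i=1}^{n} F(\lambda_i(D_n(a))) = \frac{1}{n}\sum_{i=1}^{n} F\!\left(a\!\left(\frac{i}{n}\right)\right).
\]
The goal then is simply to show that this sum converges to $\int_0^1 F(a(x))\,dx$ for every continuous, compactly supported $F:\mathbb{C}\to\mathbb{C}$.

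First I would observe that $F\circ a:[0,1]\to\mathbb{C}$ is bounded (because $F$ is compactly supported, hence bounded) and is continuous at every point where $a$ is continuous (since $F$ is continuous everywhere). Because $a$ is almost everywhere continuous by hypothesis, so is $F\circ a$, and by Lebesgue's criterion $F\circ a$ is Riemann integrable on $[0,1]$, with Riemann integral equal to its Lebesgue integral $\int_0^1 F(a(x))\,dx$.

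The second step is to recognize $\frac{1}{n}\sum_{i=1}^n F(a(i/n))$ as a Riemann sum for the tagged partition of $[0,1]$ with nodes $i/n$ and tags $i/n$, whose mesh $1/n$ tends to zero. For any bounded Riemann integrable function, all such Riemann sums converge to the Riemann integral as the mesh vanishes, so
\[
\lim_{n\to\infty}\frac{1}{n}\sum_{i=1}^{n} F\!\left(a\!\left(\frac{i}{n}\right)\right) = \int_0^1 F(a(x))\,dx,
\]
which is precisely the condition $D_n(a)\sim_\lambda a(x)$ on the domain $D=[0,1]$ with $\mu(D)=1$. The only mildly delicate point is justifying convergence of Riemann sums for a function that is merely a.e.\ continuous rather than continuous; I would handle this by sandwiching between upper and lower Darboux sums and invoking that Darboux integrability is equivalent to Riemann integrability under Lebesgue's criterion. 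No real obstacle is expected.
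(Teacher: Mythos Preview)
Your proof is correct and follows essentially the same line as the paper: both identify the eigenvalues as the samples $a(i/n)$, observe that $F\circ a$ is bounded and a.e.\ continuous hence Riemann integrable, and conclude by recognizing the spectral sum as a Riemann sum converging to $\int_0^1 F(a(x))\,dx$. The paper's version is simply more terse, omitting the Darboux-sum justification you mention at the end.
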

\begin{proof}
Given any $G\in C_c(\f C)$, we know that
\[
\lim_{n\to \infty}\frac 1n \sum_{i=1}^n G\left(a\left(\frac in \right)\right) = \int_0^1 G(a(x)) dx 
\]
since $G\circ a:[0,1]\to \f C$ is a bounded and almost everywhere continuous function, thus Riemann Integrable.
\end{proof}

\begin{lemma}\label{zerosum}
Let $\serie D \sim_\lambda f(x)$ and $\serie Z\sim_\sigma 0$ be diagonal matrices sequences, where $f$ is any measurable function on a domain $D\cu \f R^n$ with finite non-zero measure. Then
\[ 
\serie D+\serie Z\sim_\lambda f(x)
 \]
 \end{lemma}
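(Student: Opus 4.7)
The plan is to compare, for each test function $F \in C_c(\f C)$, the eigenvalue averages of $D_n + Z_n$ with those of $D_n$, and show that the difference is negligible thanks to the zero-distributedness of $\serie Z$.

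Since the matrices are diagonal, the eigenvalues of $D_n + Z_n$ are exactly the pointwise sums of the diagonal entries: $\lambda_i(D_n + Z_n) = [D_n]_{i,i} + [Z_n]_{i,i}$. Fix $F \in C_c(\f C)$. Using $\serie D \sim_\lambda f(x)$ for the base sequence, proving $\serie D + \serie Z \sim_\lambda f(x)$ reduces to showing
\[
\Delta_n := \frac{1}{n}\sum_{i=1}^n \Bigl(F\bigl([D_n]_{i,i} + [Z_n]_{i,i}\bigr) - F\bigl([D_n]_{i,i}\bigr)\Bigr) \xrightarrow{n\to\infty} 0.
\]

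For this, I would exploit the uniform continuity of $F$ (compact support) together with the characterization of zero-distributed sequences from Lemma \ref{spac}. Let $M := \|F\|_\infty$ and let $\omega_F$ denote the modulus of continuity of $F$. For any $\varepsilon > 0$, split the index set $\{1,\dots,n\}$ into
\[
S_n^\varepsilon := \{i : |[Z_n]_{i,i}| \le \varepsilon\},\qquad L_n^\varepsilon := \{i : |[Z_n]_{i,i}| > \varepsilon\}.
\]
On $S_n^\varepsilon$ each term of $\Delta_n$ is bounded by $\omega_F(\varepsilon)$, while on $L_n^\varepsilon$ each term is bounded by $2M$. Since $Z_n$ is diagonal, $\sigma_i(Z_n)$ coincides (as a multiset) with $|[Z_n]_{i,i}|$, so by the third item of Lemma \ref{spac}, $\#L_n^\varepsilon / n \to 0$. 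Hence
\[
|\Delta_n| \le \omega_F(\varepsilon) + 2M \cdot \frac{\#L_n^\varepsilon}{n},
\]
and taking $\limsup_{n\to\infty}$ followed by $\varepsilon \to 0^+$ yields $\Delta_n \to 0$.

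I expect no serious obstacle: the argument is essentially a standard "split large from small" estimate. The only delicate point is to keep straight that, for diagonal matrices, the multiset of eigenvalues of $D_n+Z_n$ is the entrywise sum of the diagonals (not obtained by some matching of the separately sorted spectra), which is exactly what allows the uniform continuity of $F$ to kick in. The hypothesis on the domain $D$ of $f$ plays no role beyond carrying the statement $\serie D\sim_\lambda f$ through, since we only manipulate the left-hand side of the spectral limit.
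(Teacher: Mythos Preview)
Your proof is correct and follows essentially the same approach as the paper's: both reduce the claim to showing that the difference of eigenvalue averages $\frac{1}{n}\sum_i\bigl(F(\lambda_i(D_n+Z_n))-F(\lambda_i(D_n))\bigr)$ tends to zero, split the index set according to whether $|[Z_n]_{i,i}|$ exceeds $\varepsilon$, bound the small part by the modulus of continuity $\omega_F(\varepsilon)$ and the large part by $2\|F\|_\infty$ times the proportion of large entries, and invoke the third item of Lemma~\ref{spac}. Your write-up is in fact a bit cleaner, since you work directly with the diagonal entries as eigenvalues, whereas the paper routes the same estimate through the notation $\sigma_i(D_n):=|[D_n]_{i,i}|$.
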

\begin{proof}
Using Lemma \ref{spac}, we know that given any $\ve > 0$ we have
\[
\lim_{n\to\infty} \frac{\#\{ i: \sigma_i(Z_n)>\ve \}}{n} = 0.
\]
Let the singular values of $D_n$ and $Z_n$ be sorted such that the $i$-th singular value is the absolute value of the $i$-th entry on their diagonal. 
\[
\sigma_i(D_n) = \Big|[D_n]_{i,i}\Big|,\qquad \sigma_i(Z_n) = \Big|[Z_n]_{i,i}\Big|,\qquad  \sigma_i(D_n+Z_n) = \Big|[D_n+Z_n]_{i,i}\Big|.
\]
We thus have
\[
\sigma_i(D_n+Z_n) - \sigma_i(D_n) \le \sigma_i(Z_n).
\]
Given any $G\in C_c(\f C)$, we get
\begin{align*}
\Bigg|\frac 1n \sum_{i=1}^n  G(\sigma_i(D_n+Z_n)) &- \int_D G(f(x)) dx\Bigg| \\  
\le &\left|\frac 1n \sum_{i=1}^n G(\sigma_i(D_n+Z_n)) - \frac 1n \sum_{i=1}^n G(\sigma_i(D_n)) \right|  \\
 & + \left|\frac 1n \sum_{i=1}^n G(\sigma_i(D_n)) - \int_D G(f(x)) dx\right| 
\end{align*}
where the first term of the sum can be written as
\begin{align*}
\Bigg|\frac 1n \sum_{i=1}^n  G(\sigma_i(D_n+Z_n)) &-  \frac 1n \sum_{i=1}^n G(\sigma_i(D_n)) dx\Bigg| \\  
\le &\left|\frac 1n \sum_{\sigma_i(Z_n)>\ve} G(\sigma_i(D_n+Z_n)) - \frac 1n \sum_{i=1}^n G(\sigma_i(D_n)) \right|  \\
 & + \left|\frac 1n \sum_{\sigma_i(Z_n)\le\ve} G(\sigma_i(D_n+Z_n)) - \frac 1n \sum_{i=1}^n G(\sigma_i(D_n)) \right|\\
 \le &\,\, 2\|G\|_\infty\frac{\#\{ i: \sigma_i(Z_n)>\ve \}}{n} + \omega_G(\ve)
\end{align*}
and the second term of the sum tends to zero when $n$ goes to infinity. This leads to
\[ 
\limsup_{n\to \infty}  \Bigg|\frac 1n \sum_{i=1}^n  G(\sigma_i(D_n+Z_n)) - \int_D G(f(x)) dx\Bigg|  \le \omega_G(\ve) \qquad \forall \ve >0
\]
but $\omega_G(\ve)$ is the modulus of continuity of $G$ and tends to zero as $\ve$ goes to zero, so we get the thesis
 \[ 
 \lim_{n\to \infty}  \frac 1n \sum_{i=1}^n  G(\sigma_i(D_n+Z_n)) = \int_D G(f(x)) \implies \serie D+\serie Z\sim_\lambda f(x)
 \]
\end{proof}

The space of diagonal sequences that admit a spectral symbol possess also a closure property with respect to the acs convergence of sequences and the convergence in measure of measurable functions.

\begin{lemma}\label{commuta}
Let $\serie D$ and $\{\{D_{n,m}\}_n\}_m$ be diagonal matrices sequences, and let $a:D\to \f C$, $a_m:D\to \f C $ be measurable functions defined on $D\cu \f R^n$ that is a measurable set with non zero finite measure. If    
\begin{itemize}
\item $\{D_{n,m}\}_{n,m}\sim_\lambda a_m(x)$
\item $\{D_{n,m}\}_{n,m}\acs \serie D$
\item $a_m(x)\xrightarrow{\mu} a(x)$
\end{itemize}
then $\serie D\sim_\lambda a(x)$.
\end{lemma}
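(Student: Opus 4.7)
The plan is, given an arbitrary $F\in C_c(\f C)$, to estimate
\[
\Delta_{n,m}:=\Big|\tfrac 1n\sum_{i=1}^n F(\lambda_i(D_n)) - \tfrac{1}{\mu(D)}\int_D F(a(x))\, dx\Big|
\]
via a triangle inequality passing through the two intermediate quantities $\tfrac 1n\sum_i F(\lambda_i(D_{n,m}))$ and $\mu(D)^{-1}\int_D F(a_m(x))\, dx$, and then to send first $n\to\infty$ at fixed $m$ and afterwards $m\to\infty$. The key simplification I would exploit is that, since $D_n$ and $D_{n,m}$ are diagonal, the averages $\tfrac 1n \sum_i F(\lambda_i(\cdot))$ are independent of the ordering of eigenvalues and equal $\tfrac 1n \sum_i F([\cdot]_{i,i})$, which permits an entry-wise analysis of the first difference.

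The resulting splitting $\Delta_{n,m}\le T^{(1)}_{n,m}+T^{(2)}_{n,m}+T^{(3)}_m$ has two easy pieces. The middle term $T^{(2)}_{n,m} := |\tfrac 1n\sum_i F(\lambda_i(D_{n,m})) - \mu(D)^{-1}\int_D F(a_m)\,dx|$ tends to $0$ as $n\to\infty$ for each fixed $m$, by the hypothesis $\{D_{n,m}\}_{n,m}\sim_\lambda a_m$. The last term $T^{(3)}_m:=\mu(D)^{-1}|\int_D F(a_m)-F(a)\,dx|$ tends to $0$ as $m\to\infty$: uniform continuity of $F$ (recall $F\in C_c$) turns $a_m\xrightarrow{\mu} a$ into $F\circ a_m\xrightarrow{\mu} F\circ a$, and on the finite-measure set $D$ with uniform bound $\|F\|_\infty$, Vitali's convergence theorem delivers convergence of the integrals.

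The main obstacle is $T^{(1)}_{n,m}:=|\tfrac 1n \sum_i F([D_n]_{i,i}) - \tfrac 1n \sum_i F([D_{n,m}]_{i,i})|$. Here I would invoke Lemma \ref{spac2} together with the remark immediately following its proof: because $\serie D$ and $\{\{D_{n,m}\}_n\}_m$ are diagonal, the a.c.s.\ decomposition can be chosen diagonal as well, yielding $D_n-D_{n,m} = N_{n,m}+R_{n,m}$ with $N_{n,m},R_{n,m}$ diagonal, $\|N_{n,m}\|\le\omega(m)$, $\rk(R_{n,m})\le n c(m)$ for all $n>n_m$, and $\omega(m),c(m)\to 0$. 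Since $R_{n,m}$ is diagonal, the set $S_{n,m}$ of indices where its diagonal is nonzero has cardinality $\le nc(m)$; on the complement one has $[D_n]_{i,i}-[D_{n,m}]_{i,i}=[N_{n,m}]_{i,i}$ of modulus at most $\omega(m)$. Writing $\omega_F$ for the modulus of continuity of $F$ and splitting the sum along $S_{n,m}$ and its complement gives
\[
T^{(1)}_{n,m}\le 2\|F\|_\infty\, c(m)+\omega_F(\omega(m)),
\]
valid for all $n>n_m$.

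Combining the three estimates, taking $\limsup_{n\to\infty}$ at fixed $m$ and then $m\to\infty$ kills every term, which yields $\serie D\sim_\lambda a(x)$. The overall scheme mirrors the proof of Lemma \ref{zerosum}; the new ingredient is that the a.c.s.\ hypothesis, being only an approximation rather than a single zero-distributed perturbation, forces the double limit and requires the auxiliary third term $T^{(3)}_m$. Diagonality is used in an essential way: it both turns the $\lambda_i$--sums into diagonal-entry sums and allows the a.c.s.\ decomposition to be chosen diagonal, so that the entry-wise splitting along $S_{n,m}$ is legitimate.
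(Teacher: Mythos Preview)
Your proposal is correct and follows essentially the same route as the paper: the three-term triangle inequality through $\tfrac1n\sum_i F(\lambda_i(D_{n,m}))$ and $\mu(D)^{-1}\int_D F(a_m)$, combined with the diagonal a.c.s.\ decomposition from Lemma~\ref{spac2} to control the first term, is exactly what the paper does. The only cosmetic difference is that the paper packages the double limit by fixing an auxiliary $\ve>0$ and choosing $m>M$ with $c(m),\omega(m),t(m)\le\ve$, whereas you keep the bounds in terms of $c(m),\omega(m)$ and send $m\to\infty$ directly; the content is identical.
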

\begin{proof}
Let $G\in C_c(\f C)$, and let the eigenvalues of the diagonal matrices be sorted so that the $i$-th eigenvalue corresponds to the $i$-th diagonal element.
\[ \lambda_i(D_n) =  [D_n]_{i,i}, \qquad \lambda_i(D_{n,m}) = [D_{n,m}]_{i,i} . \] 
We know that
\begin{align*}
\Bigg|\frac 1n \sum_{i=1}^n &G(\lambda_i(D_n)) - \int_D G(a(x)) dx\Bigg| \le\\
&\left|\frac 1n \sum_{i=1}^n G(\lambda_i(D_n)) - \frac 1n \sum_{i=1}^n G(\lambda_i(D_{n,m}))\right|\\ 
&+ \left|\frac 1n \sum_{i=1}^n G(\lambda_i(D_{n,m})) - \int_D G(a_m(x)) dx\right|\\
&+ \left|\int_D G(a_m(x)) dx - \int_D G(a(x)) dx\right|.
\end{align*}
Using Lemma \ref{spac2}, $D_n = D_{n,m} + R_{n,m} + N_{n,m}$ where $R_{n,m}$ and $N_{n,m}$ are diagonal matrices and for every $m$ there exists $n_m$ with
\[
\rk (R_{n,m})\le c(m) n, \qquad \|N_{n,m}\| \le s(m) \qquad \forall n>n_m
\]
\[ \lim_{m\to \infty} s(m) = \lim_{m\to \infty} c(m)=0. \]
Moreover, there exists a function $t(m)$ such that 
\[
\left|\int_D G(a_m(x)) dx - \int_D G(a(x)) dx\right| \le t(m) \qquad \lim_{m\to \infty}t(m)= 0
  \]
So we can fix $\ve >0$ and find an index $M$ such that 
\[
c(m)\le \ve, \quad  s(m) \le \ve, \quad t(m)\le \ve \qquad \quad \forall m>M. 
\]
We know that
\[
\lambda_i(D_n) - \lambda_i(D_{n,m})= \lambda_i(D_n-D_{n,m}) = \lambda_i(N_{n,m}+R_{n,m}) = \lambda_i(N_{n,m})+\lambda_i(R_{n,m})
\]
but if $n>n_m$ and $m>M$, then
\begin{align*}
\ve < |\lambda_i(D_n-D_{n,m})| & \implies \ve < |\lambda_i(N_{n,m})|+|\lambda_i(R_{n,m})|\le \ve + |\lambda_i(R_{n,m})|\\
& \implies 0 \ne \lambda_i(R_{n,m}).
\end{align*}
This means that 
\[
\#\{i: \ve < \lambda_i(D_n-D_{n,m}) \} \le \rk(R_{n,m})
\]
so we can estimate
\begin{align*}
\Bigg|\frac 1n \sum_{i=1}^n &G(\lambda_i(D_n)) - \frac 1n \sum_{i=1}^n G(\lambda_i(D_{n,m}))\Bigg| \le\\
&\left|\frac 1n \sum_{i:  \lambda_i(D_n-D_{n,m})\le \ve} G(\lambda_i(D_n)) - G(\lambda_i(D_{n,m}))\right|\\
&+ \left|\frac 1n \sum_{i: \lambda_i(D_n-D_{n,m})>\ve } G(\lambda_i(D_n)) -  G(\lambda_i(D_{n,m}))\right|\\
&\le \omega_G(\ve) + 2\frac{rk(R_{n,m})}{n}\|G\|_{\infty}\\
&\le \omega_G(\ve) + 2\ve\|G\|_{\infty}\qquad \forall n>n_m,\quad \forall m>M.
\end{align*}
We obtain
\begin{align*}
\limsup_{n\to\infty}\Bigg|\frac 1n \sum_{i=1}^n &G(\lambda_i(D_n)) - \int_D G(a(x)) dx\Bigg| \le\\
&\limsup_{n\to\infty}\left|\frac 1n \sum_{i=1}^n G(\lambda_i(D_n)) - \frac 1n \sum_{i=1}^n G(\lambda_i(D_{n,m}))\right|\\ 
&+ \limsup_{n\to\infty}\left|\frac 1n \sum_{i=1}^n G(\lambda_i(D_{n,m})) - \int_D G(a_m(x)) dx\right|\\
&+ \limsup_{n\to\infty}\left|\int_D G(a_m(x)) dx - \int_D G(a(x)) dx\right|\\
&\le \omega_G(\ve) + 2\ve\|G\|_{\infty} + \ve \qquad \forall \ve >0
\end{align*}
and this concludes
\[
\lim_{n\to\infty} \frac 1n \sum_{i=1}^n G(\lambda_i(D_n)) = \int_D G(a(x))\implies  \serie D\sim_\lambda a(x).\]
\end{proof}

\subsection{GLT}

A matrix sequence $\serie A$ may have several different singular values symbols, even on the same domain. For specific sequences we can choose one of their symbols, and denote it as \textit{GLT symbol} of the sequence
\[
\serie A\GLT k(x,\theta).
\]
If we call $\mc G$ the set of sequences that own a GLT symbol, we know that
\begin{itemize}
\item $\mc G$ is a $\f C$ algebra. In fact, given $\serie A,\serie B\in \mc G$ and $c\in \f C$, we have that
\[ 
\{ A_n+B_n\}_n\in \mc G \qquad \{A_nB_n\}_n\in \mc G\qquad \{cA_n\}_n\in\mc G.
 \]
\item $\mc G$ is closed with respect to the acs pseudometric. If $\{B_{n,m}\}_m\in\mc G$ for all $m$, then
\[
\B \acs \serie A\implies \serie A\in \mc G
\]
\end{itemize} 

The chosen symbols have all the same domain $D=[0,1]\times[-\pi,\pi]$. Let $\mc M_D$ be the set of measurable functions on $D$, where we identify two functions if they are equal almost everywhere. The choice of the symbol can be seen as a map
\[
S : \mc G \to \mc M_D. 
\] 
if we endow $\mc M_D$ with the metric $d_M$ that induces the convergence in measure, then $S$ can be seen as a map of algebras and complete pseudometric spaces, with the following properties that are proved in \cite{GS} or in \cite{Mio}.
\begin{theorem}\label{S}$ $
\begin{enumerate}
\item $S$ is an homomorphism of algebras. Given $\serie A,\serie B\in \mc G$ and $c\in \f C$, we have that
\[ 
S(\{ A_n+B_n\}_n) = S(\serie A+\serie B),\]\[ S(\{A_nB_n\}_n) = S(\serie A)S(\serie B),\]\[ S(\{cA_n\}_n)= cS(\serie A).
 \]
\item The kernel of $S$ are exactly the zero-distributed sequences.
\item $S$ preserves the distances. Given $\serie A,\serie B\in \mc G$ we have
\[
\dacs{\serie A}{\serie B} = d_m(S(\serie A),S(\serie B)).
\]
\item $S$ is onto. All measurable functions are GLT symbols.
\item GLT symbols are spectral symbols:
\[
\serie A\in \mc G\implies \serie A\sim_\sigma S(\serie A)
\]
\item The graph of $S$ is closed in $\mc G\times \mc M_D$. If $\B$ are sequences in $\mc G$ that converge acs to $\serie A$, and their symbols converge in measure to $k(x,\theta)$, then $S(\serie A) = k(x,\theta)$.
\end{enumerate}
\end{theorem}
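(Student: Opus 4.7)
The six items are tightly interrelated, and the plan is to prove them in an order that lets later items reuse earlier ones. I would first dispose of (1) and (5) together: the space $\mc G$ and the symbol map $S$ are built from three generating families (diagonal samplings $\sdiag a$ with $a$ a.e.\ continuous, Toeplitz sequences from $L^1$ generators, and zero-distributed sequences); on each family (5) is a direct verification (for $\sdiag a$ one argues as in Lemma \ref{lambdacomp}, with obvious modifications for the singular value version) and (1) is a compatibility check on generators that propagates through sums and products to all of $\mc G$. The extension of (5) from generators to $\mc G$ uses that $\sim_\sigma$ is closed under a.c.s.-convergence, a classical fact from \cite{ACS}. Item (2) then reduces to (5): if $S(\serie A)=0$ then $\serie A\sim_\sigma 0$ by (5); conversely, if $\serie A\sim_\sigma 0$ then the zero sequence (trivially in $\mc G$ with symbol $0$) a.c.s.-converges to $\serie A$, so closure of $\mc G$ under a.c.s.-limits forces $\serie A\in\mc G$ with $S(\serie A)=0$.

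The main obstacle is (3), the isometry $\dacs{\serie A}{\serie B}=d_m(S(\serie A),S(\serie B))$. The inequality $\le$ would first be checked on the generators, where one can exhibit explicit decompositions $A_n-B_n=N_n+R_n$ of small norm and rank matching a simple-function decomposition of the symbol difference; it then extends to the whole algebra by subadditivity of $p$, the triangle inequality of $d_m$, and an $\ve/3$ density argument. The reverse inequality $\ge$ is the delicate step: given an a.c.s.-decomposition $A_n-B_n=N_n+R_n$ with $\|N_n\|\le\omega(m)$ and $\rk(R_n)\le nc(m)$, the rank bound has to be translated into a sublevel-set measure bound on $S(\serie A)-S(\serie B)$ and the norm bound into an essential-sup-type bound, both facts ultimately relying on (5). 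In the diagonal setting this is precisely what the piecewise-convergence machinery encodes.

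With (3) in hand, items (6) and (4) follow. The closed-graph property (6) is almost immediate: given $\B\acs\serie A$ in $\mc G$ with symbols converging in measure to some $k$, closure of $\mc G$ under a.c.s.-limits puts $\serie A\in\mc G$, and (3) combined with the Hausdorff property of $d_m$ identifies $S(\serie A)$ with $k$. Surjectivity (4) follows from (3) combined with Theorem \ref{complete}: finite linear combinations of products $a(x)\cdot f(\theta)$ with $a$ a.e.\ continuous and $f$ a trigonometric polynomial lie in the image of $S$ by construction and are dense in $(\mc M_D,d_m)$. Given $k\in\mc M_D$, pick such simple approximants $k_m\to k$, lift them to sequences $\{B_{n,m}\}_n\in\mc G$ with $S(\{B_{n,m}\}_n)=k_m$, note that (3) makes $\B$ a $d_{acs}$-Cauchy family, and apply Theorem \ref{complete} to extract an a.c.s.-limit $\serie A$; this limit lies in $\mc G$ by closure and satisfies $S(\serie A)=k$ by (6). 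Throughout, (3) remains the hardest step, since it is the precise dictionary between the algebraic a.c.s.\ notion of proximity and the measure-theoretic $d_m$-notion.
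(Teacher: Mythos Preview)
The paper does not supply its own proof of this theorem: immediately before the statement it says that these properties ``are proved in \cite{GS} or in \cite{Mio}'', and no argument is given in the present document. So there is nothing here to compare your sketch against.

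That said, your outline is a faithful summary of how these results are established in the cited GLT literature: items (1), (2), (5) are verified on the three generating families and pushed through the algebra/a.c.s.\ closure; the isometry (3) is indeed the substantive step, and (4), (6) then follow from (3) together with completeness (Theorem~\ref{complete}) and density of simple symbols. One small point: in your argument for (2), the converse direction does not need the a.c.s.\ limit trick you describe, because zero-distributed sequences are among the \emph{generators} of $\mc G$ and are assigned symbol $0$ by construction; so $\serie A\sim_\sigma 0$ already places $\serie A$ in $\mc G$ with $S(\serie A)=0$ directly.
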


\section{Piecewise Convergence}

Let us formalize the concept of piecewise convergence shown in the introduction.\\
Given a vector $v\in \f C^n$, let $f:[0,1]\to \f C$ be the piecewise linear function that interpolates $v_i$ on the points $i/n$ of the interval $[0,1]$, where $v_0:=0$. It respects
\[
f\left(\frac{i}{n}\right) = v_i \qquad \forall \,i=0,1,\dots,n
\]
and its analytical expression is
\[
f(x)  = 
nx(v_{i}^{(n)}-v_{i-1}^{(n)}) + i(v_{i-1}^{(n)}-v_{i}^{(n)}) + v_{i}^{(n)}  \quad  x\in \left[\frac {i-1}n,\frac in\right ] 
\]
 
\begin{definition}
Given a sequence  $\serie D$ of diagonal matrices, let $d_n = \text{diag}(D_n)$, and $f_n$ the maps associated to $d_n$. We say that $D_n$ converges \textit{piecewise} to a function $f:[0,1]\to \f C$ if the sequence $f_n$ converges to $f$ in measure. In this case we write
\[
\serie D \tratti f(x).
\]
\end{definition}

This definition copes well with the concept of zero distributed sequences and diagonal sampling of continuous functions.

\subsection{Zero Distributed and Diagonal Sampling Sequences}

We recall here that a sequence is called zero distributed if the function zero is a singular value symbol. Moreover, they are all GLT sequences with symbol zero, and coincide with the kernel of the map $S$ in Theorem \ref{S}, that is, are the only GLT sequences with symbol zero.
In the case of diagonal matrices, we can prove that a diagonal sequence is zero-distributed if and only if it converges piecewise to the function zero.

\begin{lemma}\label{zerotratti}
Given a sequence of diagonal matrices $\serie Z$, the following statement are equivalent.
\begin{enumerate}
\item $\serie Z\tratti 0$
\item $\serie Z\GLT 0$
\item $\serie Z\sim_\lambda 0$
\end{enumerate}
\end{lemma}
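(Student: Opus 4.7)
The plan is to reduce all three conditions to the common concrete characterization
\[
\lim_{n\to \infty} \frac{\#\{i : |v_i^{(n)}| > \ve\}}{n} = 0 \qquad \forall \ve > 0,
\]
where $v_i^{(n)} := [Z_n]_{i,i}$. For the equivalence $(2)\iff(3)$, I would invoke Theorem \ref{S}: point (5) gives $\serie Z \GLT 0 \Rightarrow \serie Z \sim_\sigma 0$, while point (2) identifies the kernel of $S$ with the zero-distributed sequences, so the converse holds as well. For diagonal $\serie Z$, both $\sim_\sigma 0$ and $\sim_\lambda 0$ collapse to the displayed condition: the former by Lemma \ref{spac}, the latter directly from the definition of $\sim_\lambda$ by testing against bump functions supported near $0\in\f C$.

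For $(3)\Rightarrow(1)$, on each subinterval $L_i := [(i-1)/n,i/n]$ the triangle inequality applied to the convex combination $(1-s)v_{i-1}^{(n)}+s v_i^{(n)}$ gives $|f_n|\le \max(|v_{i-1}^{(n)}|,|v_i^{(n)}|)$, so $\{|f_n|>\ve\}$ is covered by those $L_i$ with at least one endpoint of modulus exceeding $\ve$. Thus $\mu(\{|f_n|>\ve\}) \le 2\,\#\{i:|v_i^{(n)}|>\ve\}/n \to 0$.

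The main obstacle is the converse $(1)\Rightarrow(3)$, since the interpolant $f_n$ can dip arbitrarily close to $0$ between large values of $|v_i^{(n)}|$. I would prove the contrapositive via the local lower bound
\[
|v_i^{(n)}|>\ve \;\Longrightarrow\; \mu\bigl(\{|f_n|>\ve/4\}\cap L_i\bigr) \ge \frac{1}{2n},
\]
established by a case split on the size of $|v_{i-1}^{(n)}|$. If $|v_{i-1}^{(n)}|<\ve/2$, the reverse triangle inequality forces $|f_n(s)|\ge s|v_i^{(n)}|-(1-s)|v_{i-1}^{(n)}|\ge \ve/4$ for every $s\in[1/2,1]$. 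If $|v_{i-1}^{(n)}|\ge\ve/2$, the minimum of $|f_n|$ along $L_i$ is smallest in the antiparallel configuration (when $v_{i-1}^{(n)}$ and $v_i^{(n)}$ point in opposite directions, so the segment crosses the origin); a direct computation in that worst case shows $\mu(\{|f_n|\ge\ve/4\}\cap L_i)\ge (1-\ve/(2(|v_{i-1}^{(n)}|+|v_i^{(n)}|)))/n\ge 1/(2n)$, using $|v_{i-1}^{(n)}|+|v_i^{(n)}|\ge\ve$. Summing over all $i$ with $|v_i^{(n)}|>\ve$ yields $\mu(\{|f_n|>\ve/4\})\ge \#\{|v_i^{(n)}|>\ve\}/(2n)$, and since $f_n\to 0$ in measure the left-hand side vanishes, forcing the displayed characterization and hence $(3)$.
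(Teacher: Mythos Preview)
Your proof is correct and follows the same strategy as the paper: all three statements are reduced to the counting condition of Lemma~\ref{spac}, and the link with piecewise convergence is established by bounding $\mu\{|f_n|>\ve\}$ subinterval by subinterval (your constant $1/2$ playing the role of the paper's $1/3$). A small bonus is that your Case~2 argument for $(1)\Rightarrow(3)$ works for complex entries via moduli, whereas the paper's corresponding case split into $\{f_n>\ve\}$ and $\{f_n<-\ve\}$ tacitly treats only the real case.
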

\begin{proof} A matrix sequence is a GLT sequence with symbol zero if and only if it is a zero distributed sequence, so we can substitute the second statement with $\serie Z\sim_\sigma 0$.\\

\noindent$2\implies 3)$ Using Lemma \ref{spac}, we can split the eigenvalues of $Z_n$ into two diagonal matrices $Z_n = \wh Z_n + \wt Z_n$ with $\rk(\wh Z_n) = o(n)$ and $\|\wt Z_n\| = o(1)$. This means that given $\ve$, there exists $N$ such that 
\[
\rk(\wh Z_n)<\ve n, \qquad \|\wt Z_n\|<\ve \qquad \forall n\ge N
\]
or also said as
\[
\#\{i :  |\lambda_i(Z_n)|\ge \ve \} < \ve n \qquad \forall n\ge N. 
\]
We can now use the definition of spectral symbol to prove the thesis. Given any $F\in C_c(\f C)$, we get
\[
\left|\frac{1}{n} \sum_{i=1}^{n} F(\lambda_i(Z_n)) - F(0)\right| \le\]\[
\left|\frac{1}{n} \sum_{|\lambda_i(Z_n)|\ge \ve} F(\lambda_i(Z_n))\right|  +  \left|\frac{1}{n} \sum_{|\lambda_i(Z_n)|< \ve} F(\lambda_i(Z_n)) - F(0)\right|\le 
\]
\[
\frac{\#\{i :  |\lambda_i(Z_n)|\ge \ve \}}{n} \|F\|_\infty  +  \omega_F(\ve) \le \ve \|F\|_\infty + \omega_F(\ve)\qquad \forall n\ge N
\]
where $\omega_F(\ve)$ is the modulus of continuity of the function $F$. We thus get
\[
\limsup_{n\to \infty}\left|\frac{1}{n} \sum_{i=1}^{n} F(\lambda_i(Z_n)) - F(0)\right| \le \ve \|F\|_\infty + \omega_F(\ve)
\]
for every $\ve >0$. So we get to the conclusion
\[
\lim_{n\to \infty} \frac{1}{n} \sum_{i=1}^{n} F(\lambda_i(Z_n)) = F(0) = \int_0^1 F(0) dx
\]
\[
\implies \serie Z \sim_\lambda 0
\]

\noindent$2\impliedby 3)$  Let $F\in C_c(\f R)$. If $G:\f C\to \f C$ is defined as $G(z):= F(|z|)$, we get that this is still a continuous and compact supported function $G\in C_c(\f C)$. We also know that the singular values of a diagonal matrix are the absolute values of the eigenvalues. With this we can conclude
\[
\lim_{n\to \infty} \frac{1}{n} \sum_{i=1}^{n} F(\sigma_i(Z_n)) =
\lim_{n\to \infty} \frac{1}{n} \sum_{i=1}^{n} F(|\lambda_i(Z_n)|) =
\lim_{n\to \infty} \frac{1}{n} \sum_{i=1}^{n} G(\lambda_i(Z_n)) 
\]
\[ =\int_0^1 G(0) dx  = \int_0^1 F(0) dx
\implies \serie Z \sim_\sigma 0
\]

\noindent$1\implies 2)$ Let $f_n(x)$ be the piecewise linear functions associated with $Z_n$. We fix $\ve >0$ and study the quantity
\[
E_{i,n} := n\cdot \mu\left\{x\in \left[\frac{i-1}n,\frac in \right] : |f_n(x)|>\ve \right\}
\]
that we can divide into $E_{i,n} = E_{i,n}^++E_{i,n}^-$ where 
\[
E_{i,n}^+ := n\cdot \mu\left\{x\in \left[\frac{i-1}n,\frac in \right] : f_n(x)>\ve \right\} 
\]
\[
E_{i,n}^- := n\cdot \mu\left\{x\in \left[\frac{i-1}n,\frac in \right] : f_n(x)<-\ve \right\}.
\]
Suppose now that $f_n\left(\frac{i}{n}\right) >2\ve$. Recall that 
\[
f_n(x) =
nx\left[f_n\left(\frac in\right)-f_n\left(\frac {i-1}n\right)\right] + i\left[f_n\left(\frac {i-1}n\right)-f_n\left(\frac in\right)\right] + f_n\left(\frac in\right).
\]
Let us divide the analysis in cases.
\begin{itemize}
\item If $ f_n\left(\frac{i-1}{n}\right)\ge \ve$, then $E_{i,n}=1$. 
\item If $ |f_n\left(\frac{i-1}{n}\right)|< \ve $, then $E_{i,n}^-=0$, so $E_{i,n}^+=E_{i,n}$. Given $ x\in \left[\frac{i-1}n,\frac in \right] $ we have
\[
f_n(x)>\ve \iff  \frac in >x >
\frac{
\ve -i(f_n\left(\frac {i-1}n\right)-f_n\left(\frac in\right)) - f_n\left(\frac in\right)
}{
n(f_n\left(\frac in\right)-f_n\left(\frac {i-1}n\right))
}
\]
so
\[
E_{i,n}^+ =  n\left[\frac in - \frac{
 \ve -i(f_n\left(\frac {i-1}n\right)-f_n\left(\frac in\right)) - f_n\left(\frac in\right)
 }{
 n(f_n\left(\frac in\right)-f_n\left(\frac {i-1}n\right))
 }\right]
= 
\frac{ 
 f_n\left(\frac in\right) -\ve
 }{
 f_n\left(\frac in\right)-f_n\left(\frac {i-1}n\right)
 }.
\]
Using the hypothesis, we get
\[
E_{i,n} = 
\frac{ 
 f_n\left(\frac in\right) -\ve
 }{
 f_n\left(\frac in\right)-f_n\left(\frac {i-1}n\right)
 }
 \ge
\frac{ 
 f_n\left(\frac in\right) -\ve
 }{
 f_n\left(\frac in\right)+\ve
 }  = 
1
 -
 \frac{ 
2\ve
  }{
 f_n\left(\frac in\right)+\ve
  }
  \ge 
  1
   -
   \frac{ 
  2\ve
    }{
   2\ve+\ve
    }
  = \frac 13.
\]
\item If $ f_n\left(\frac{i-1}{n}\right)\le -\ve $, then given $ x\in \left[\frac{i-1}n,\frac in \right] $ we have
\[
f_n(x)>\ve \iff  \frac in >x >
\frac{
\ve -i(f_n\left(\frac {i-1}n\right)-f_n\left(\frac in\right)) - f_n\left(\frac in\right)
}{
n(f_n\left(\frac in\right)-f_n\left(\frac {i-1}n\right))
},
\]
\[
f_n(x)<-\ve \iff  \frac {i-1}n <x <
\frac{
-\ve -i(f_n\left(\frac {i-1}n\right)-f_n\left(\frac in\right)) - f_n\left(\frac in\right)
}{
n(f_n\left(\frac in\right)-f_n\left(\frac {i-1}n\right))
},
\]
so
\[
E_{i,n}^+ =  n\left[\frac in - \frac{
 \ve -i(f_n\left(\frac {i-1}n\right)-f_n\left(\frac in\right)) - f_n\left(\frac in\right)
 }{
 n(f_n\left(\frac in\right)-f_n\left(\frac {i-1}n\right))
 }\right]
= 
\frac{ 
 f_n\left(\frac in\right) -\ve
 }{
 f_n\left(\frac in\right)-f_n\left(\frac {i-1}n\right)
 },
\]
\[
E_{i,n}^- =  n\left[ \frac{
 - \ve -i(f_n\left(\frac {i-1}n\right)-f_n\left(\frac in\right)) - f_n\left(\frac in\right)
 }{
 n(f_n\left(\frac in\right)-f_n\left(\frac {i-1}n\right))
 }
 - \frac{i-1}n\right]
= 
\frac{ 
 - \ve -f_n\left(\frac {i-1}n\right)
 }{
 f_n\left(\frac in\right)-f_n\left(\frac {i-1}n\right)
 },
\]
\[
E_{i,n} = \frac{ 
 f_n\left(\frac in\right) -\ve
 }{
 f_n\left(\frac in\right)-f_n\left(\frac {i-1}n\right)
 }
 +
 \frac{ 
  - \ve -f_n\left(\frac {i-1}n\right)
  }{
  f_n\left(\frac in\right)-f_n\left(\frac {i-1}n\right)
  }
  = 
  1 - \frac{2\ve}{f_n\left(\frac in\right)-f_n\left(\frac {i-1}n\right)}.
\]
Using the hypothesis, we get
\[
E_{i,n} = 
1 - \frac{2\ve}{f_n\left(\frac in\right)-f_n\left(\frac {i-1}n\right)}
 \ge 
1 - \frac{2\ve}{2\ve+\ve}
= \frac 13
\]
\end{itemize}
The same would happen if $f_n(i/n)<-2\ve$, so
\begin{align*}
\mu \{ x: |f_n(x)|>\ve  \} 
& = \sum_{i=1}^n\mu \left\{ x: |f_n(x)|>\ve,\,\, x\in \left[\frac{i-1}n,\frac in \right]  \right\}\\
& \ge \sum_{i: |f_n(i/n)|>2\ve}\mu \left\{ x: |f_n(x)|>\ve,\,\, x\in \left[\frac{i-1}n,\frac in \right]  \right\}\\
& = \sum_{i: |f_n(i/n)|>2\ve}\frac 1n E_{i,n}\ge \frac 1{3n} \#\left\{ i: \left|f_n\left(\frac in\right)\right|>2\ve \right\}.
\end{align*}
We also know that $f_n\xrightarrow{\mu} 0$, so 
\[
\lim_{n\to\infty} \mu \{ x: |f_n(x)|>\ve  \} = 0 \qquad \forall \ve >0
\]
that leads to
\[
\lim_{n\to\infty}\frac 1{3n} \# \left\{ i: \left|f_n\left(\frac in\right)\right|>2\ve \right\} = 0 \qquad \forall \ve >0
\]
The values $\left|f_n\left(\frac in\right)\right|$ are the singular values of $Z_n$ for every $i>0$, so we get also
\[
\lim_{n\to\infty}\frac 1{3n} \# \left\{ i: \sigma_i(Z_n)>2\ve \right\} = 0 \qquad \forall \ve >0
\]
and thanks to Lemma \ref{spac}, we can conclude that $\serie Z\sim_\sigma 0$.\\

\noindent$1 \impliedby 2)$ Let $\ve>0$ be a fixed value. We notice that
\[
|f_n(x)|\ge \ve\implies \max\left\{ \left| f_n\left(\frac {i-1}n\right)\right|,\left| f_n\left(\frac in\right)\right|\right\}\ge \ve \qquad \forall  x\in \left[\frac{i-1}n,\frac in \right]
\]
so we deduce
\begin{align*}
\mu \{ x: |f_n(x)|>\ve  \} 
& = \sum_{i=1}^n\mu \left\{ x: |f_n(x)|>\ve,\,\, x\in \left[\frac{i-1}n,\frac in \right]  \right\}\\
& = \sum_{i:  \max\left\{ \left| f_n\left(\frac {i-1}n\right)\right|,\left| f_n\left(\frac in\right)\right|\right\}\ge \ve}\mu \left\{ x: |f_n(x)|>\ve,\,\, x\in \left[\frac{i-1}n,\frac in \right]  \right\}\\
& \le \frac 1n  \# \left\{ i: \max\left\{ \left| f_n\left(\frac {i-1}n\right)\right|,\left| f_n\left(\frac in\right)\right|\right\}\ge \ve   \right\}    \\
& \le \frac 2n  \#\left\{ i:  \left| f_n\left(\frac {i}n\right)\right|\ge \ve   \right\}  .
\end{align*}
The values $\left|f_n\left(\frac in\right)\right|$ are the singular values of $Z_n$ for every $i>0$, so we can use Lemma \ref{spac}, and obtain that the last quantity converges to zero when $n$ tends to infinity. Therefore,
\[
\lim_{n\to \infty} \mu \{ x: |f_n(x)|>\ve  \}  = 0 \qquad \forall \ve >0
\]
meaning that $f_n\xrightarrow{\mu} 0$.
\end{proof}

Given the sampling diagonal matrices $\sdiag{a}$, where $a:[0,1]\to \f C$ is a continuous function, and
\[
D_n(a):= \underset{i=1,\dots,n}{\text{diag}} a\left(\frac{i}{n}\right) = 
\begin{pmatrix}
a\left(\frac{1}{n}\right) & & & \\
& a\left(\frac{2}{n}\right) & & \\
& & \ddots & \\
& & & a(1)
\end{pmatrix}
\]
we can prove that $\sdiag a\tratti a(x)$.

\begin{lemma}\label{diagcont}
Given any continuous map $a:[0,1]\to\f C$, the functions $f_n$ associated to $D_n(a)$ converge in measure to $a$. In particular,
	\[
	\sdiag a \tratti a(x).
	\] 
\end{lemma}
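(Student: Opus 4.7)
The plan is to exploit the uniform continuity of $a$ on the compact interval $[0,1]$. Fix $\varepsilon>0$ and pick $\delta>0$ so that $|a(x)-a(y)|<\varepsilon$ whenever $|x-y|<\delta$. For every $n>1/\delta$ and every $i\ge 2$, on the interval $\left[\frac{i-1}{n},\frac{i}{n}\right]$ the function $f_n(x)$ is a convex combination of the two values $a\left(\frac{i-1}{n}\right)$ and $a\left(\frac{i}{n}\right)$. Both of these values lie within $\varepsilon$ of $a(x)$ since the entire interval has length $1/n<\delta$, so $|f_n(x)-a(x)|<\varepsilon$ for every such $x$.

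The only interval where this reasoning fails is the first one, $[0,1/n]$, where the interpolant joins the artificial value $f_n(0)=0$ to $a(1/n)$ and may therefore stray arbitrarily far from $a(x)$ when $a(0)\ne 0$. However, this interval has Lebesgue measure $1/n$, which vanishes as $n\to\infty$. Combining the two observations we obtain
\[
\mu\left\{x\in[0,1]:|f_n(x)-a(x)|>\varepsilon\right\}\le \frac{1}{n}
\]
for every $n>1/\delta$, and since this bound is independent of $\varepsilon$ (only the threshold $n>1/\delta$ depends on $\varepsilon$), letting $n\to\infty$ yields $f_n\xrightarrow{\mu}a$. This is exactly the statement $\{D_n(a)\}_n\tratti a(x)$.

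There is no serious obstacle here: the argument essentially produces uniform convergence of $f_n$ to $a$ on $[1/n,1]$, and convergence in measure follows because the defect occurs only on a set of measure $1/n$. The one point that requires a line of care is the very first subinterval, which is the reason the conclusion is stated in the weaker ``in measure'' form rather than as uniform convergence on all of $[0,1]$.
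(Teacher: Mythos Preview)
Your proof is correct and follows essentially the same approach as the paper: uniform continuity of $a$ on $[0,1]$ gives a uniform bound $|f_n(x)-a(x)|$ on $[1/n,1]$ for $n>1/\delta$, and the exceptional interval $[0,1/n]$ has vanishing measure. Your convex-combination argument even yields the slightly sharper bound $\varepsilon$ in place of the paper's $2\varepsilon$, but the strategy is identical.
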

\begin{proof}
Any continuous function on $[0,1]$ is uniformly continuous thanks to Heine-Cantor theorem, so given any $\ve>0$, there exists $\delta>0$ such that 
\[
|x-y|\le\delta\implies |a(x)-a(y)|\le \ve.
\]
Let $n$ be a natural number with $n^{-1}<\delta$. If we denote $a_k:=a(k/n)$, then 
\[ 
|a(x) - a_i| \le \ve \qquad \forall x\in \left[ \frac{i-1}{n},\frac in \right]\quad \forall i.  
 \]
On the same interval, the function $f_n$  is a segment in the complex space from $a_{i-1}$ to $a_i$ for every $i>1$. We know that $|a_i-a_{i-1}|\le \ve$ , so
\[ 
|f_n(x) - a_i| \le \ve \qquad \forall x\in \left[ \frac{i-1}{n},\frac in \right] \quad \forall i>1.  
 \]
This entails that
\[ |f_n(x)-a(x)|\le 2\ve  \qquad \forall x\in \left[\frac 1n,1\right] \quad \forall n > \delta^{-1}. \]
So the functions $f_n(x)$ converge in measure to $a(x)$ and
\[
\sdiag a\tratti a(x)
\] 
\end{proof}
The set of continuous functions is dense in the space of measurable functions, so we can prove some approximation results on convergent sequences of continuous functions.

\begin{lemma}\label{mn}
Given any $a:[0,1]\to\f C$ measurable function, and $a_m\in C([0,1])$ continuous functions that converge in measure to $a(x)$, there exists a crescent and unbounded map $m(n)$ such that
\[
\{D_n(a_{m(n)})\}_n\GLT a(x)\otimes 1\qquad \sdiag{a_{m(n)}}\tratti a(x)
\]
\end{lemma}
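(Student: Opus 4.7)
The plan is to combine the completeness of $d_{acs}$ (Theorem \ref{complete}) with a diagonal selection based on Lemma \ref{diagcont}, and then to take a pointwise minimum so that a single index map $m(n)$ controls both conclusions at the same time.

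First I would show that the family $\{\{D_n(a_m)\}_n\}_m$ is Cauchy in the acs pseudometric. Each $\{D_n(a_m)\}_n$ is GLT with symbol $a_m(x)\otimes 1$, so Theorem \ref{S}(3) gives
\[
d_{acs}(\{D_n(a_m)\}_n,\{D_n(a_{m'})\}_n) = d_M(a_m(x)\otimes 1,\,a_{m'}(x)\otimes 1).
\]
Since $a_m\to a$ in measure on $[0,1]$, the functions $a_m\otimes 1$ converge in measure on $[0,1]\times[-\pi,\pi]$ (convergence in measure is preserved by the trivial extension in $\theta$), so they form a Cauchy sequence in $\mc M_D$. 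Hence $\{\{D_n(a_m)\}_n\}_m$ is $d_{acs}$-Cauchy, and Theorem \ref{complete} supplies a crescent unbounded map $m_1:\f N\to\f N$ such that every crescent $m'(n)\le m_1(n)$ with $m'(n)\to\infty$ satisfies $\{\{D_n(a_m)\}_n\}_m\acs\{D_n(a_{m'(n)})\}_n$.

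Next, to secure the piecewise convergence, I would run a diagonal selection. Fix a metric $d_\mu$ inducing convergence in measure on $[0,1]$. By Lemma \ref{diagcont}, for each $m$ the interpolants $f_n^{(m)}$ of $D_n(a_m)$ converge in measure to $a_m$, so we may choose a strictly increasing sequence $N_m$ with $d_\mu(f_n^{(m)},a_m)<1/m$ for every $n\ge N_m$. Setting $m_2(n):=\max\{m:N_m\le n\}$ gives a crescent unbounded map for which $n\ge N_{m_2(n)}$ by construction. I would then define
\[
m(n):=\min\{m_1(n),m_2(n)\},
\]
which is still crescent and unbounded.

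To finish, I would verify both conclusions for this $m(n)$. Because $m(n)\le m_1(n)$, Theorem \ref{complete} tells us that $\{D_n(a_{m(n)})\}_n$ is the acs limit of $\{\{D_n(a_m)\}_n\}_m$; the GLT symbols $a_m\otimes 1$ converge in measure to $a\otimes 1$, so by the closed-graph property of $S$ (Theorem \ref{S}(6)) we get $\{D_n(a_{m(n)})\}_n\GLT a(x)\otimes 1$. For the piecewise statement, $m(n)\le m_2(n)$ forces $n\ge N_{m(n)}$, so $d_\mu(f_n^{(m(n))},a_{m(n)})<1/m(n)\to 0$, while $a_{m(n)}\to a$ in measure since $m(n)\to\infty$; a triangle inequality then gives $f_n^{(m(n))}\to a$ in measure, i.e.\ $\sdiag{a_{m(n)}}\tratti a(x)$. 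The main subtlety is the simultaneous bookkeeping: one has to produce a single crescent unbounded $m(n)$ that is admissible for the completeness theorem (hence $m(n)\le m_1(n)$) while staying past the speed threshold $N_{m(n)}$ of the piecewise convergence (hence $m(n)\le m_2(n)$), and the pointwise minimum does exactly this.
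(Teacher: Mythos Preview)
Your proof is correct and follows essentially the same architecture as the paper's: use Theorem \ref{complete} on the $d_{acs}$-Cauchy family $\{\{D_n(a_m)\}_n\}_m$ to obtain $m_1$, build a second threshold map $m_2$ from Lemma \ref{diagcont} to control the piecewise convergence, and take $m(n)=\min\{m_1(n),m_2(n)\}$, concluding via Theorem \ref{S}(6). The only real difference is cosmetic: the paper constructs $m_2$ through two interleaved index sequences $M_k,N_k$ mirroring the proof of Theorem \ref{complete}, whereas your diagonal selection $m_2(n)=\max\{m:N_m\le n\}$ is a cleaner way to encode the same monotone threshold and makes the verification $n\ge N_{m(n)}$ immediate.
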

\begin{proof}
The functions $a_m(x)$ are continuous, so $\sdiag{a_m}\GLT a_m(x)\otimes 1$, and $a_m(x)\otimes 1$ converges in measure to $a(x)\otimes 1$. This means that the sequence $a_m(x)\otimes 1$ is a Cauchy sequence in $\mc M_D$, but the map $S$ of Theorem \ref{S} preserves the distances, so $\{\sdiag{a_m}\}_m$ is also a Cauchy sequence. Thanks to Theorem \ref{complete}, we know that there exists an unbounded crescent map $m_1(n)$ such that
\[ \{\sdiag{a_m}\}_m \acs  \sdiag{a_{m_1(n)}}.\]  
The map $S$ has also a closed graph, so we get
\[\sdiag{a_{m_1(n)}}\GLT a(x)\otimes 1.\]  
Thanks to Lemma \ref{diagcont}, we also know that $\sdiag {a_m}\tratti a_m(x)$ for every $m$, meaning that the piecewise linear functions $a_{n,m}(x)$ associated to $D_n(a_m)$ converge to $a_m(x)$.
The measure convergence is metrizable through the distance $d_M$, so we can define the following indexes.
\begin{itemize}
\item $a_m(x)$ converges to $a(x)$ in measure, so for every $k>0$ there exists an index $M_k$ such that
\[
M_k< M_{k+1}, \qquad d_M(a_m(x),a(x)) \le 2^{-k} \qquad \forall m\ge M_k\quad \forall k>0
\]
\item $a_{n,m}(x)$ converges to $a_{m}(x)$ in measure, so there exists an index $\wt N_m$ such that
\[
d_M(a_{n,m}(x),a_{m}(x)) \le 2^{-k} \qquad \forall n\ge \wt N_m
\]
and we can define a strictly increasing set of indexes $N_k$ with the following recursive procedure.
\[
N_1 = \max_{m\le M_1} \left\{\wt N_m\right\}
 \qquad 
N_{k} = \max\left\{
\max_{M_k\le m\le M_{k+1}}
\{\wt N_m\}, 
N_{k-1} +1\right\} 
\quad \forall k>1
\]
\end{itemize}
We can now define a new crescent unbounded map $m_2(n)$ 
\[
m_2(n):=
\begin{cases}
1 & N_{1}>n \\
M_k & N_{k+1}>n\ge N_k \quad \forall k>0
\end{cases}
\]
and prove that the sequence $a_{n,m'(n)}(x)$ converges to $a(x)$ for any crescent unbounded map $m'(n)$ such that $m'(n)\le m_2(n)$. In fact, suppose that $n\ge N_k$ and $m'(n)\ge M_k$. We get
\[
d_M(a_{n,m'(n)}(x), a(x))  \le  d_M(a_{n,m'(n)}(x), a_{m'(n)}(x)) + d_M(a_{m'(n)}(x), a(x)) 
\]
and
\[
m'(n) \ge M_k \implies d_M(a_{m'(n)}(x), a(x)) \le 2^{-k}
\]
\[
M_k \le M_s \le m'(n) < M_{s+1}, \quad m'(n)\le m_2(n) \]
\[
\implies M_k\le M_s\le m_2(n), \quad k\le s,\quad  n \ge N_k\]\[ 
\implies  d_M(a_{n,m'(n)}(x), a_{m'(n)}(x)) \le 2^{-s}\le 2^{-k} .
\]
Consequentially,
\[
d_M(a_{n,m'(n)}(x), a(x))  \le 2^{1-k}
\]
so the distance goes to zero when $n$ tends to infinity,and this implies that
\[
\sdiag{a_{m'(n)}} \tratti a(x)
\]
If we consider $m(n) := \min\{m_1(n),m_2(n)\}$ we find that it is an unlimited crescent function and it is bounded by both maps $m_1(n)$ and $m_2(n)$. Using again Lemma \ref{complete} and what we've shown before, we prove the thesis.
\end{proof}

\subsection{Main Results}

We are now ready to see how the piecewise convergence, the concept of spectral symbol and the GLT symbols are connected when dealing with diagonal sequences.

\begin{theorem}\label{trattilambda}
Given any diagonal sequence $\serie D$ and a measurable function $f:[0,1]\to \f C$, then
\[
 \serie D\tratti f(x)\implies \serie D\sim_\lambda f(x).
\]
\end{theorem}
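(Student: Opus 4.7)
The plan is to approximate $f$ by continuous functions and reduce to cases handled by the preparatory lemmas of this section. Choose continuous $a_m:[0,1]\to\f C$ with $a_m\xrightarrow{\mu} f$, which exists by standard density arguments such as Lusin's theorem. Apply Lemma \ref{mn} to produce a crescent unbounded map $m:\f N\to\f N$ such that $\sdiag{a_{m(n)}}\tratti f(x)$, and such that the construction inside the proof of Lemma \ref{mn} (going through Theorem \ref{complete}) also guarantees $\{\sdiag{a_m}\}_m\acs \sdiag{a_{m(n)}}$.

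Next, I would show $\sdiag{a_{m(n)}}\sim_\lambda f(x)$ via Lemma \ref{commuta}. For each fixed $m$, Lemma \ref{lambdacomp} gives $\sdiag{a_m}\sim_\lambda a_m(x)$. Combined with the a.c.s.\ convergence $\{\sdiag{a_m}\}_m\acs \sdiag{a_{m(n)}}$ and the measure convergence $a_m\xrightarrow{\mu} f$, Lemma \ref{commuta} applied to the doubly-indexed family $D_{n,m}:=D_n(a_m)$ with target $\sdiag{a_{m(n)}}$ yields the desired $\sdiag{a_{m(n)}}\sim_\lambda f(x)$.

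Finally, analyse the error $E_n:=D_n-D_n(a_{m(n)})$. Its piecewise linear function is $f_n-g_{n,m(n)}$, where $f_n\xrightarrow{\mu} f$ by the hypothesis $\serie D\tratti f$, and $g_{n,m(n)}\xrightarrow{\mu} f$ since $\sdiag{a_{m(n)}}\tratti f$ from the first step. Hence $\{E_n\}_n\tratti 0$, and Lemma \ref{zerotratti} gives $\{E_n\}_n\sim_\sigma 0$. Writing $D_n = D_n(a_{m(n)}) + E_n$ and applying Lemma \ref{zerosum} to the summands $\sdiag{a_{m(n)}}\sim_\lambda f$ and $\{E_n\}_n\sim_\sigma 0$ delivers the thesis $\serie D\sim_\lambda f(x)$.

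The main obstacle is choosing $m(n)$ so that it serves both sides of the split simultaneously: it must support the a.c.s.\ approximation needed to transfer the spectral symbol of the continuous samples to $f$ via Lemma \ref{commuta}, while also keeping $\sdiag{a_{m(n)}}\tratti f$ so that the remainder is piecewise-convergent to zero and Lemma \ref{zerotratti} applies. Lemma \ref{mn} is tailored precisely to supply such a compatible $m(n)$, so the real work has already been absorbed into that preparatory result.
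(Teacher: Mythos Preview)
Your proposal is correct and follows essentially the same route as the paper: approximate $f$ by continuous $a_m$, use Lemma~\ref{mn} to produce $m(n)$, combine Lemma~\ref{lambdacomp} with Lemma~\ref{commuta} to get $\sdiag{a_{m(n)}}\sim_\lambda f$, and then use Lemma~\ref{zerotratti} and Lemma~\ref{zerosum} on the difference. The only cosmetic divergence is how you obtain $\{\sdiag{a_m}\}_m\acs\sdiag{a_{m(n)}}$: you appeal to the internals of the proof of Lemma~\ref{mn} (via Theorem~\ref{complete}), whereas the paper derives it more cleanly from the \emph{statement} of Lemma~\ref{mn} (the GLT conclusion $\sdiag{a_{m(n)}}\GLT f\otimes 1$) together with the isometry property of $S$ in Theorem~\ref{S}, which turns $d_M(a_m\otimes 1,f\otimes 1)\to 0$ into the desired a.c.s.\ convergence.
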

\begin{proof}
Let $f_m(x)$ be continuous functions that converge in measure to $f(x)$. Using Lemma \ref{mn}, we can find a map $m(n)$ such that
\[
\sdiag{f_{m(n)}}\GLT f(x) \otimes 1, \qquad \sdiag{f_{m(n)}}\tratti f(x).
\]
We know that $\sdiag{f_m}\GLT f_m(x) \otimes 1$, but the map $S$ in Theorem \ref{S} preserves the distances, so
\[
d_M(f_m(x) \otimes 1, f(x) \otimes 1) \to 0 \implies  d_{acs}(\{\sdiag{f_m}\}_m, \sdiag{f_{m(n)}}) \to 0
\]
\[
\implies \{\sdiag{f_m}\}_m\acs  \sdiag{f_{m(n)}}.
\]
Using Lemma \ref{lambdacomp}, we get $\sdiag{f_m}\sim_\lambda f_m(x)$, so all the hypothesis of Lemma \ref{commuta} are satisfied and we obtain
\[
\sdiag{f_{m(n)}}\sim_\lambda f(x).
\]
Both the sequences $\serie D$ and $\sdiag{f_{m(n)}}$ converge piecewise to $f(x)$. This means that both the sequences of piecewise linear functions $g_n(x)$ and $h_n(x)$ associated to $\serie D$ and $\sdiag{f_{m(n)}}$ converge in measure to $f(x)$. The difference of the two functions converges in measure to $0$, but $g_n(x)-h_n(x)$ is the piecewise linear function associated to $D_n-D_n(f_{m(n)})$ so we get that
\[
\serie D - \sdiag{f_{m(n)}} \tratti 0 
\]
and using Lemma \ref{zerotratti}, we get
\[ \serie D - \sdiag{f_{m(n)}} \sim_\sigma 0. \]
Eventually, using Lemma \ref{zerosum} we conclude  
\[
\serie D = \sdiag{f_{m(n)}} + (\serie D - \sdiag{f_{m(n)}}) \sim_\lambda f(x)
\]
\end{proof}

\begin{theorem}\label{tratti}
Given $\serie D$ a sequence of diagonal matrices, and $k:[0,1]\to\f C$ any measurable function, then the following are equivalent
\begin{itemize}
\item $D_n\tratti k(x)$,
\item $D_n\GLT k(x)\otimes 1$. 
\end{itemize}
\end{theorem}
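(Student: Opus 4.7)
The plan is to mirror the structure of the proof of Theorem \ref{trattilambda}, exploiting the symmetry between piecewise convergence and GLT convergence that is already packaged for us in Lemma \ref{zerotratti} (which says the two notions coincide for the zero symbol) and Lemma \ref{mn} (which produces a sampling sequence simultaneously satisfying both). The key observation that makes this work is that the piecewise linear interpolation is linear in the diagonal entries: the interpolant of $D_n - D_n'$ is exactly $g_n - h_n$, where $g_n, h_n$ are the interpolants of $D_n, D_n'$. Thus $\{D_n - D_n'\}_n \tratti 0$ is equivalent to $g_n - h_n \xrightarrow{\mu} 0$.

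For the forward direction, assume $\serie D \tratti k(x)$. Pick continuous functions $f_m$ converging in measure to $k$ and apply Lemma \ref{mn} to get a crescent unbounded $m(n)$ with $\sdiag{f_{m(n)}} \GLT k(x) \otimes 1$ and $\sdiag{f_{m(n)}} \tratti k(x)$. Then the interpolants of $D_n$ and $D_n(f_{m(n)})$ both converge in measure to $k$, so by the linearity observation $\{D_n - D_n(f_{m(n)})\}_n \tratti 0$. By Lemma \ref{zerotratti}, this difference is a GLT sequence with symbol $0$. Since $\mc G$ is a $\f C$-algebra,
\[
\serie D = \sdiag{f_{m(n)}} + \bigl(\serie D - \sdiag{f_{m(n)}}\bigr)
\]
is a GLT sequence with symbol $k(x) \otimes 1 + 0 = k(x) \otimes 1$.

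For the backward direction, assume $\serie D \GLT k(x) \otimes 1$. Apply Lemma \ref{mn} again to obtain $\sdiag{f_{m(n)}}$ with the same two properties. The difference $\serie D - \sdiag{f_{m(n)}}$ is GLT with symbol $k(x) \otimes 1 - k(x) \otimes 1 = 0$ because $S$ is an algebra homomorphism (Theorem \ref{S}), hence it is zero-distributed, hence by Lemma \ref{zerotratti} it converges piecewise to $0$. By the linearity observation, writing $g_n, h_n$ for the interpolants of $D_n, D_n(f_{m(n)})$, we get $g_n - h_n \xrightarrow{\mu} 0$. Combined with $h_n \xrightarrow{\mu} k$, this yields $g_n \xrightarrow{\mu} k$, i.e.\ $\serie D \tratti k(x)$.

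The main obstacle is essentially conceptual rather than technical: one must verify that the piecewise linear interpolation behaves linearly under addition of diagonal matrices (immediate from the analytical expression in Section~3) and that Lemma \ref{mn} furnishes a bridging sequence $\sdiag{f_{m(n)}}$ which sits simultaneously on the GLT side and on the piecewise side with the same target $k$. Once these two bookkeeping facts are in place, the proof reduces to the observation that, modulo a zero-distributed/piecewise-convergent-to-zero correction, the two notions are already known to agree on samplings of continuous functions.
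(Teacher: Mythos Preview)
Your proposal is correct and takes essentially the same approach as the paper: both produce a bridging sequence $\sdiag{k_{m(n)}}$ via Lemma~\ref{mn}, then use the linearity of piecewise interpolation together with Lemma~\ref{zerotratti} and the GLT algebra structure to reduce the question to the zero symbol. The only cosmetic difference is that the paper packages the argument as a single chain of equivalences
\[
\serie D\GLT k(x)\otimes 1 \iff \{D_n(k_{m(n)}) - D_n\}_n\GLT 0 \iff \{D_n(k_{m(n)}) - D_n\}_n\tratti 0 \iff \serie D\tratti k(x),
\]
whereas you treat the two implications separately.
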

\begin{proof}
Given $k(x)$ a measurable function, we can always find a sequence of continuous functions $ k_m(x) $ that converges to $k(x)$ in measure.
Using Lemma \ref{mn}, there exists a map $m(n)$ such that
\[
\{D_n(k_{m(n)})\}_n\GLT k(x)\otimes 1\qquad \sdiag{k_{m(n)}}\tratti k(x).
\]
Using the algebra properties of the GLT space, we get
\[
\serie D\GLT k(x)\otimes 1 \iff \{D_n(k_{m(n)}) - D_n\}_n\GLT 0 
\] 
and thanks to Lemma \ref{zerotratti} we have
\[
 \{D_n(k_{m(n)}) - D_n\}_n\GLT 0 \iff \{D_n(k_{m(n)}) - D_n\}_n\tratti 0.\]
The piecewise convergence is linear: if $\serie A\tratti a(x)$ and $\serie B\tratti b(x)$, it is easy to see that $\serie A+\serie B\tratti a(x)+b(x)$. We can thus conclude that
 \[ \{D_n(k_{m(n)}) - D_n\}_n\tratti 0 \iff \serie  D\tratti k(x).
\]
\end{proof}

\section{Eigenvalues Order}

A sequence $\serie A$ may have several different spectral or singular values symbols. When dealing with real valued symbols, though, there is a preferred one, called \textit{decreasing rearrangement}.

\subsection{Decreasing Rearrangement}
Any measurable function $f:D\to \f R$ has a decreasing rearrangement that is a decreasing function $g:[0,1]\to \f R$ with the same distribution. We can define it as
\[
g(y) := \inf \left\{  z: \frac{\mu\{x: f(x)>z \}}{\mu(D)} \le y   \right\} .
\]
It is easy to check that it is a decreasing function, and moreover the following property hold:
\[
\frac{\mu\{x: f(x)>z \}}{\mu(D)} = \mu\{y: g(y)>z \} \qquad \forall z\in \f R.
\]
This result can be found in any graduate book of analysis, like \cite{decr}. If $f(x)$ is a spectral symbol for a sequence $\serie A$, then we can prove that also $g(y)$ is a spectral symbol for the same sequence. 

\begin{lemma}\label{rearr}
Given $\serie A$ a matrix sequence, $f:D\to \f R$ a measurable function with $D\cu\f R^n$ a set with finite non-zero measure, and $g:[0,1]\to \f R$ is its deacrising rearrangement, then
\[
\serie A\sim_\lambda f(x) \implies \serie A \sim_\lambda g(x).
\]
\end{lemma}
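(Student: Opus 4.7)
The plan is to translate the defining equimeasurability property of the decreasing rearrangement into an equality of integrals against test functions. Specifically, once I establish
\[
\frac{1}{\mu(D)} \int_D F(f(x))\, dx = \int_0^1 F(g(y))\, dy \qquad \forall F \in C_c(\f C),
\]
the conclusion follows immediately: the hypothesis $\serie A \sim_\lambda f(x)$ gives
\[
\lim_{n\to\infty} \frac{1}{n} \sum_{i=1}^n F(\lambda_i(A_n)) = \frac{1}{\mu(D)} \int_D F(f(x))\, dx,
\]
and combining with the integral identity above produces the defining relation for $\serie A \sim_\lambda g(x)$.

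To prove the integral identity, I would first reduce to real-valued $F$ by splitting into real and imaginary parts. Since $f$ and $g$ take values in $\f R$, only the restriction $F|_{\f R}$ matters, and $F|_{\f R}$ is a continuous compactly supported function on $\f R$ that can be further split as $F = F^+ - F^-$ with $F^\pm \geq 0$. The stated property of the decreasing rearrangement,
\[
\frac{\mu\{x : f(x) > z\}}{\mu(D)} = \mu\{y : g(y) > z\} \qquad \forall z \in \f R,
\]
says precisely that the normalized pushforward of Lebesgue measure on $D$ via $f$ coincides with the pushforward of Lebesgue measure on $[0,1]$ via $g$, as Borel probability measures on $\f R$. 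Equal measures integrate equal test functions to the same value; this can be made rigorous via the layer-cake formula
\[
\int F^\pm(h)\, d\mu = \int_0^\infty \mu\{F^\pm(h) > t\}\, dt,
\]
noting that the superlevel set $\{F^\pm(h) > t\}$ has the form $h^{-1}(E_t)$ for a Borel set $E_t \subseteq \f R$ depending only on $F^\pm$, so its measure depends only on the pushforward of $\mu$ under $h$. Alternatively one may approximate $F|_{\f R}$ uniformly by simple step functions built from half-line indicators $\chi_{(z,\infty)}$, for which the identity is exactly the distributional hypothesis.

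The main obstacle is essentially notational: one has to verify the measure-theoretic step that equality of distribution functions (on a generating family of half-lines) upgrades to equality of integrals for all continuous compactly supported test functions, and one must not be tripped up by the mismatch between $F$ being $\f C$-valued and $f, g$ being $\f R$-valued. Both are standard facts; no properties of $\serie A$ beyond the hypothesis enter the argument, since all the work happens on the symbol side.
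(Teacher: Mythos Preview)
Your proposal is correct and follows essentially the same route as the paper: both reduce the claim to the integral identity $\frac{1}{\mu(D)}\int_D F(f)\,dx=\int_0^1 F(g)\,dy$ and then read off the conclusion from the definition of $\sim_\lambda$. The paper carries out exactly your ``alternatively'' option, approximating $F\in C_c(\f R)$ uniformly by step functions built from indicators $\chi_{(a,b]}$, for which the identity is immediate from the equimeasurability property; your layer-cake variant would work equally well.
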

\begin{proof}
Let $\chi_{(a,b]}$ be the indicator function of the real interval $(a,b]$. We know that
\begin{align*}
\frac{1}{\mu(D)}\int_D \chi_{(a,b]}(f(x)) dx = \,\,& \frac{\mu\{x: f(x)>a \}   - \mu\{x: f(x)>b \} }{\mu(D)}\\
= \,\, & \mu\{y: g(y)>a \}  -\mu\{y: g(y)>b \}\\
= \,\, &   \int_0^1 \chi_{(a,b]}(g(y)) dy.
\end{align*}
This implies that given any step function $G$, obtained as a  linear combinations of indicator functions of real intervals $(a,b]$, we have that
\[ 
\frac{1}{\mu(D)}\int_D G(f(x)) dx = \int_0^1 G(g(y)) dy.
 \] 
Any real valued, compact supported and continuous function can be approximated in infinity norm arbitrarily well by step functions, so given $F\in C_c(\f R)$ and $\ve >0$, we can take a step function $G$ such that 
	\[
	\|F(x)-G(x)\|_\infty \le \ve.
	\] 
This means that
\begin{align*}
 \Bigg|  \frac{1}{\mu(D)}\int_D F(f(x)) & dx - \int_0^1 F(g(y)) dy  \Bigg| \le\\
& \left|  \frac{1}{\mu(D)}\int_D F(f(x)) dx - \frac{1}{\mu(D)}\int_D G(f(x)) dx  \right| \\
&+ \left|   \frac{1}{\mu(D)}\int_D G(f(x)) dx  - \int_0^1 G(g(y)) dy  \right| \\
&+ \left| \int_0^1 G(g(y)) dy - \int_0^1 F(g(y)) dy  \right|\le \ve + 0 +\ve
\end{align*}
for every $\ve >0$, so
\[
\frac{1}{\mu(D)}\int_D F(f(x)) dx = \int_0^1 F(g(y)) dy.
\] 
This concludes that for every $F\in C_c(\f C)$, 
\[
\lim_{n\to\infty} \frac{1}{n} \sum_{i=1}^{n} F(\lambda_i(A_n)) = \frac{1}{\mu(D)}\int_D F(f(x)) dx= \int_0^1 F(g(y)) dy.
\] 
\end{proof}

This means that we can work only with decreasing functions. In particular, for these functions it is possible to establish a connection between spectral symbols and piecewise convergence. The next lemma shows that the converse implication of Theorem \ref{trattilambda} holds with additional hypothesis.

\begin{lemma}\label{decr}
Let $\serie D$ be a sequence of diagonal hermitian matrices, and let $f_n$ be the maps associated to $d_n=diag(D_n)$. If $f_n$ and $f$ are decreasing real valued and measurable functions on $[0,1]$, then 
\[
\serie D \sim_\lambda f(x) \implies \serie D\tratti f(x)
\]
\end{lemma}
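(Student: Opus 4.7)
The plan is to decompose $f_n$ into a right-continuous step-function part plus a small interpolation remainder, and show that each piece converges to its target in measure. Define $g_n:[0,1]\to \f R$ by $g_n(x):=d_i^{(n)}$ on $((i-1)/n,i/n]$; since $f_n$ is decreasing, we have $d_1^{(n)}\ge d_2^{(n)}\ge\cdots\ge d_n^{(n)}$, so each $g_n$ is itself a decreasing step function. I aim to establish two claims: first that $g_n\xrightarrow{\mu} f$, and second that $f_n-g_n\xrightarrow{\mu} 0$; together these give $f_n\xrightarrow{\mu} f$, that is, $\serie D\tratti f(x)$.

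For the first claim I will use a Helly-type compactness argument. Since $f$ is real valued and decreasing on $[0,1]$ it is bounded, so fix $M\ge \|f\|_\infty$; applying $\serie D\sim_\lambda f$ to any $F\in C_c(\f R)$ equal to $1$ on $[-M,M]$ and supported in $[-M-1,M+1]$ shows $|B_n|/n\to 0$, where $B_n:=\{i:|d_i^{(n)}|>M+1\}$ (cf.\ also Lemma~\ref{spac}). Helly's selection theorem applied to the monotone $g_n$, after passing through $\arctan$ to tame possibly unbounded values, yields that any subsequence admits a further subsequence $g_{n_k}$ converging off a countable set to a decreasing function $h:[0,1]\to\overline{\f R}$. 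For every $F\in C_c(\f R)$, extending $F$ by $F(\pm\infty):=0$, dominated convergence gives $\int_0^1 F(g_{n_k})\,dx\to \int_0^1 F(h)\,dx$, while the hypothesis identifies this limit as $\int_0^1 F(f)\,dx$. Choosing $F\equiv 1$ on arbitrarily large intervals forces $h$ to be finite almost everywhere, and the resulting equality $\int F\circ h = \int F\circ f$ for all $F\in C_c(\f R)$ then means that $h$ and $f$ share the same pushforward distribution under Lebesgue measure. Two decreasing functions on $[0,1]$ with the same distribution must agree almost everywhere (the uniqueness underlying Lemma~\ref{rearr}), so $h=f$ a.e.; since every subsequence of $g_n$ has a further subsequence converging a.e.\ to $f$, the entire sequence converges to $f$ in measure.

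For the second claim, the key observation is that on $I_i=((i-1)/n,i/n]$, $f_n$ is the linear interpolant between $d_{i-1}^{(n)}$ and $d_i^{(n)}$ while $g_n\equiv d_i^{(n)}$, so $|f_n-g_n|\le d_{i-1}^{(n)}-d_i^{(n)}$ throughout $I_i$; hence
\[
\mu\{x:|f_n(x)-g_n(x)|>\ve\}\le \frac{1}{n}\#\{1\le i\le n:d_{i-1}^{(n)}-d_i^{(n)}>\ve\}.
\]
I split these indices into ``bad'' ones (with $i\in B_n$ or $i-1\in B_n$, at most $2|B_n|=o(n)$) and ``good'' ones (with $d_{i-1}^{(n)},d_i^{(n)}\in[-M-1,M+1]$). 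Partitioning the good indices into maximal runs of consecutive integers, the differences telescope on each run to a total in $[0,2(M+1)]$, and there are at most $|B_n|+1$ such runs; hence the sum of good-pair differences is $O(|B_n|)=o(n)$, so at most $o(n)/\ve$ good pairs have difference exceeding $\ve$. Combining these estimates yields $\mu\{|f_n-g_n|>\ve\}=o(1)$, closing the second claim. The delicate step is the first: the $g_n$ are only bounded in probability, so a priori the Helly limit could take infinite values on a set of positive measure; the tail bound $|B_n|/n\to 0$ extracted directly from the hypothesis is precisely what rules this out, and the monotone rigidity of Lemma~\ref{rearr} then pins the limit to $f$.
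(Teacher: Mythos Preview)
Your proof is correct and takes a genuinely different route from the paper. The paper argues by contradiction: assuming $f_n$ does not converge to $f$ in measure, it locates a continuity point $x_0$ of $f$ and a subsequence along which $f_n(x_0)$ stays $2\ve$ away from $f(x_0)$, then constructs explicit plateau functions $G\in C_c(\f R)$ (splitting into two cases according to whether $f_n(\delta)$ stays bounded or diverges) whose eigenvalue averages visibly disagree with $\int_0^1 G(f)\,dx$, contradicting the spectral-symbol hypothesis. Your argument is instead structural: you pass to the step functions $g_n$ (whose integrals against test functions are exactly the eigenvalue averages), invoke Helly compactness together with the uniqueness of decreasing rearrangements to pin every subsequential limit to $f$, and then dispose of the interpolation error $f_n-g_n$ by a telescoping/Markov bound driven by $|B_n|/n\to 0$. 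What your approach buys is modularity and a clean identification of the only real obstruction (mass escaping to $\pm\infty$); what the paper's approach buys is self-containment, avoiding any appeal to Helly at the price of a hand-built case analysis. One minor remark: your boundedness of $f$ uses that $f$ is real-valued at the endpoints, which the stated hypothesis grants; if one wanted to allow $f$ finite only a.e.\ on $(0,1)$, you could take $M=\max(|f(\gamma)|,|f(1-\gamma)|)$ and let $\gamma\to 0$ at the end, which is essentially how the paper handles its Case~2.
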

\begin{proof}
$f$ is a decreasing function, so the set of its discontinuity points have measure zero. If we suppose that $f_n$ do not converge to $f$ in measure, then in particular they do not converge punctually almost everywhere, so there is a set $S\cu [0,1]$ with non-zero measure such that
\[
\lim_{n\to\infty}f_n(x) \ne f(x) \qquad \forall x\in S. 
\]
Let $x_0\in S$ be a point where $f$ is continuous and $x_0\ne 0$. There exists $\ve>0$ such that $|f_n(x_0)-f(x_0)|>2\ve $ frequently. 
Surely, there exist infinite index $m$ such that $f_m(x_0)-f(x_0)>2\ve $ or there exist infinite index $m$ such that $f_m(x_0)-f(x_0)<-2\ve $. Suppose without loss of generality that the first case holds, and let $g_n$ be the subsequence of $g_n:=f_{k_n}$ such that
\[
g_n(x_0)-f(x_0)>2\ve\qquad  \forall n\in\f N.
\]
Using the continuity of $f$ in $x_0$, we can find $\delta>0$ such that
\[f(x_0-z)< f(x_0) + \ve \qquad \forall 0\le z\le 2\delta\]
and $x_0>4\delta$. Let $M:= f(x_0)+2\ve$, and divide in cases.\\

Case 1.) Suppose the sequence $g_n(\delta)$ has a limit point $z$.\\ Take $T > \max\{M+\ve, z\}$ and let $G\in C_c(\f R)$ such that $0\le G(x)\le 1$ for every $x\in\f R$, and
\[
G(x) = 
\begin{cases}
1 & x\in [M,T],\\
0 & x\not\in (M-\ve, T+\ve).
\end{cases}
\]
Notice that $g_n(x_0)>M$, $\delta < x_0$, and $g_n$ are decreasing, so $g_n(\delta)>M$, meaning that $z\ge M$. Moreover, $T>z$ implies that 
\[
g_n(x) \in [M,T] \qquad \forall x\in [\delta,x_0]\quad \text{frequently in }n.
\]  
If we consider only the points $i/n$ with $i=1,\dots,n$, then
\[
\frac 1 {k_n} \sum_{i=1}^{k_n} G\left(g_n\left(\frac i {k_n}  \right)  \right)
\ge
\frac{ 
\# \left\{ i: \frac{i}{k_n}\in [\delta,x_0]  \right\} 
}{k_n}
\ge  x_0 - \frac 3 2 \delta 
\] 
frequently when $ k_n>\frac{2}{\delta}$.
Moreover, we have
\[
 x > x_0-2\delta \implies f(x) \le f(x_0-2\delta) < f(x_0) + \ve = M-\ve  
\]
so
\[
\int_0^1 G(f(x)) dx \le x_0 - 2\delta.
\]
Finally, using the definition of spectral symbol, we obtain
\begin{align*}
x_0 - \frac 3 2 \delta &\le 
\lim_{n\to\infty} \frac{1}{n} \sum_{i=1}^{n}  G\left(f_n\left(\frac i {n}  \right)  \right)  \\
&= \lim_{n\to\infty} \frac{1}{n} \sum_{i=1}^{n} G(\lambda_i(D_n))\\
&= \int_0^1 G(f(x)) dx \\
&\le x_0 - 2\delta,
\end{align*}
that is an absurd.\\

Case 2.) Suppose the sequence $g_n(\delta)$ has not a limit point.\\
This means that any subsequence of $g_n(\delta)$ is not bounded, but $g_n(\delta)>M$ for every $n$, so they must diverge to $+\infty $. Let $\gamma >0$ such that $0<6\gamma < \delta$, and denote $R = f(\gamma)$, $T=f(1-\gamma)$. We can find a function $G\in C_c(\f R)$ 
such that $0\le G(x)\le 1$ for every $x\in\f R$, and
\[
G(x) = 
\begin{cases}
1 & x\in [T,R],\\
0 & x\not\in (T-\ve, R+\ve).
\end{cases}
\]
Since $g_n(\delta)$ diverges to infinity, then it will definitively be greater than $R+\ve$, and consequently
\[
x<\delta \implies g_n(x) \ge g_n(\delta) > R + \ve. 
\]
This means that
\[
\frac 1 {k_n} \sum_{i=1}^{k_n} G\left(g_n\left(\frac i {k_n}  \right)  \right)
\le
\frac{ 
\# \left\{ i: \frac{i}{k_n}\in [\delta,1]  \right\} 
}{k_n}
\le  1 - \frac 1 2 \delta 
\] 
frequently when $ k_n>\frac{2}{\delta}$. 
Moreover, we have
\[
 1-\gamma >x > \gamma  \implies T = f(1-\gamma )\le f(x) \le  f(\gamma) = R,  
\]
so
\[
\int_0^1 G(f(x)) dx \ge 1-2\gamma.
\]
Finally, using the definition of spectral symbol, we obtain
\begin{align*}
1-3\gamma > 1 - \frac 1 2 \delta &\ge 
\lim_{n\to\infty} \frac{1}{n} \sum_{i=1}^{n}  G\left(f_n\left(\frac i {n}  \right)  \right)  \\
&= \lim_{n\to\infty} \frac{1}{n} \sum_{i=1}^{n} G(\lambda_i(D_n))\\
&= \int_0^1 G(f(x)) dx \\
&\ge 1-2\gamma 
\end{align*}
that is an absurd.
\end{proof}

Using this result on decreasing functions, we can use the natural order of $\f R$ to obtain results regarding permutated diagonal matrices.

\subsection{GLT diagonal sequences}

Here we can use all the results proved in the other sections, to show that if a sequence of diagonal matrices has a spectral symbol, then we can reorder the elements on the diagonal to produce a GLT sequence with the same symbol.

\begin{theorem}
Given $\serie D$ a sequence of diagonal matrices with real entries such that $\serie{D}\sim_\lambda f(x)$, with $f:[0,1]\to\f R$,
then there exist permutation matrices $P_n$ such that
$$\{P_nD_nP_n^T\}\GLT f(x)\otimes 1.$$
\end{theorem}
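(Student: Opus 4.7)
The plan is to approximate $\serie D$ by a diagonal sampling sequence of a continuous function, sort both sequences into decreasing order so that they agree up to a zero-distributed perturbation, and then transport the agreement back via a permutation similarity. First I would pick real continuous functions $f_m\to f$ in measure and apply Lemma \ref{mn} to obtain a crescent unbounded map $m(n)$ such that $\Delta_n:=D_n(f_{m(n)})$ satisfies $\{\Delta_n\}_n\GLT f(x)\otimes 1$ and $\{\Delta_n\}_n\tratti f(x)$; then Theorem \ref{trattilambda} gives $\{\Delta_n\}_n\sim_\lambda f(x)$, so both $\serie D$ and $\{\Delta_n\}_n$ admit $f$ as a spectral symbol. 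Letting $g:[0,1]\to\f R$ denote the decreasing rearrangement of $f$, Lemma \ref{rearr} upgrades these to $\serie D\sim_\lambda g(x)$ and $\{\Delta_n\}_n\sim_\lambda g(x)$.

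Next I would pick permutation matrices $\sigma_n,\tau_n$ that sort the diagonals of $D_n$ and of $\Delta_n$ into decreasing order, producing $\wt D_n:=\sigma_n D_n\sigma_n^T$ and $\wt\Delta_n:=\tau_n\Delta_n\tau_n^T$. Both sequences are real diagonal with decreasing diagonals and have unchanged spectra, so they still satisfy $\sim_\lambda g$, and Lemma \ref{decr} yields $\{\wt D_n\}_n\tratti g(x)$ and $\{\wt\Delta_n\}_n\tratti g(x)$. Since the piecewise linear interpolant is linear in the diagonal entries on the common grid $\{i/n\}$, the interpolant of $\wt D_n-\wt\Delta_n$ is the difference of the two interpolants and hence converges to $0$ in measure; therefore $\{\wt D_n-\wt\Delta_n\}_n\tratti 0$, and Lemma \ref{zerotratti} gives that this difference is zero-distributed.

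Setting $P_n:=\tau_n^{-1}\sigma_n$, a direct calculation gives
\[ P_nD_nP_n^T-\Delta_n=\tau_n^{-1}(\wt D_n-\wt\Delta_n)\tau_n, \]
which is a permutation conjugation of a zero-distributed diagonal sequence, hence itself zero-distributed. Since zero-distributed sequences are the kernel of the symbol map $S$ (Theorem \ref{S}), they belong to $\mc G$ with GLT symbol $0$, and the $\f C$-algebra structure of $\mc G$ then yields $\{P_nD_nP_n^T\}_n=\{\Delta_n\}_n+\{P_nD_nP_n^T-\Delta_n\}_n\GLT f(x)\otimes 1$.

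The hard part is recognising that, although $D_n$ and $\Delta_n$ need not be close entrywise after any permutation, sorting each into decreasing order forces the two diagonals to approximate the same monotone rearrangement $g$, so their diagonal difference is automatically zero-distributed; everything else is a clean application of the tools already assembled. A minor technicality is that the piecewise linear interpolant of a decreasingly sorted diagonal may fail to be monotone on the initial interval $[0,1/n]$, but since this interval has vanishing measure and the proof of Lemma \ref{decr} only exploits monotonicity at interior points $x_0>0$, the application is unaffected.
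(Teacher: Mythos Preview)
Your proof is correct and follows essentially the same strategy as the paper: build an auxiliary diagonal GLT sequence with symbol $f\otimes 1$ via Lemma~\ref{mn}, sort both it and $\serie D$ decreasingly so that Lemma~\ref{decr} forces each to piecewise-converge to the rearrangement $g$, deduce that their difference is zero-distributed, and transport this back through a permutation to conclude. The only cosmetic difference is that the paper passes through $\GLT g(x)\otimes 1$ for the sorted sequences before taking the difference, whereas you stay at the level of piecewise convergence and invoke Lemma~\ref{zerotratti} directly; your route is marginally cleaner, and your remark about the harmless failure of monotonicity on $[0,1/n]$ is a technicality the paper itself glosses over.
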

\begin{proof}
Notice that the definition of spectral symbol depends only on the eigenvalues, and not on their order, so any permutation of the elements on the diagonal of $D_n$ does not change the spectral symbol.

Let $g(x)$ be the decreasing rearrangement of $f(x)$, and let $Q_n$ be permutation matrices such that $Q_nD_nQ_n^T$ have the eigenvalues sorted in decreasing order on the diagonal, for every $n$. Using Lemma \ref{rearr}, we know that
\[
\{Q_nD_nQ_n^T\}_n\sim_\lambda g(x)
\]
but now $g(x)$ and the functions associated to $Q_nD_nQ_n^T$ are all decreasing, so we can apply Lemma \ref{decr} and obtain
\[
\{Q_nD_nQ_n^T\}_n\tratti g(x).
\]
Thanks to Theorem \ref{tratti}, we also know that
\[
\{Q_nD_nQ_n^T\}_n\GLT g(x)\otimes 1.
\]
Since $f(x)$ is a measurable functions, then there exist a sequence $f_n(x)$ of continuous function that converge to $f(x)$ in measure, 
and thanks to Lemma \ref{mn}, we can find a sequence of diagonal matrices $\serie{D'}$ such that 
\[ 
\serie {D'}\GLT f(x)\otimes 1
 \]
and using the Theorems \ref{tratti} and \ref{trattilambda}, we get
\[
\serie {D'}\sim_\lambda f(x).
\]
We can then find again permutation matrices $S_n$ that sort the elements of $D'n$ in decreasing order, so that Lemma \ref{rearr}, Lemma \ref{decr} and Theorem \ref{tratti} lead to
\[
\{S_nD'_nS_n^T\}_n\GLT g(x)\otimes 1.
\]
Using the fact that the GLT space is an algebra, we obtain
\[
\{S_nD'_nS_n^T - Q_nD_nQ_n^T\}_n \GLT 0
\]
that, thanks to Lemma \ref{zerotratti}, leads to
\[
\{S_nD'_nS_n^T - Q_nD_nQ_n^T\}_n \sim_\lambda 0.
\]
Permuting again the entries, we have
\[
\{D'_n - S_n^TQ_nD_nQ_n^TS_n\}_n \sim_\lambda 0
\]
so we can use again  Lemma \ref{zerotratti}, that leads to
\[
\{D'_n - S_n^TQ_nD_nQ_n^TS_n\}_n \GLT 0
\]
but GLT is an algebra, so
\[
\{S_n^TQ_nD_nQ_n^TS_n\}_n \GLT f(x)\otimes 1.
\]
If $S_n^TQ_n=P_n$, we conclude that
\[
\{P_nD_nP_n^T\}_n \GLT f(x)\otimes 1.
\]
\end{proof}

\bibliography{diag.bib}

\bibliographystyle{plain}

\end{document}